\newtheorem{thm}{Theorem}[section]
\newtheorem{lem}[thm]{Lemma}
\newtheorem{prop}[thm]{Proposition}
\theoremstyle{definition}
\newtheorem{df}[thm]{Definition}
\newtheorem{ex}[thm]{Example}
\newtheorem{rem}[thm]{Remark}
\newtheorem*{thma}{Theorem A}
\newtheorem*{thmb}{Theorem B}
\newtheorem*{thmc}{Theorem C}
\newtheorem*{thmd}{Theorem D}
\numberwithin{equation}{section}
\def\UU{\mathcal{U}}
\def\VV{\mathcal{V}}
\def\diam{\text{\rm diam}}
\def\Leb{\text{\rm Leb}}
\def\logf{\log\frac{1}{\epsilon}}
\def\P{\mathbb{P}}
\def\R{\mathbb{R}}
\def\PP{\mathcal{P}_{\mathbb{P}}(\Omega \times X)}
\def\M{\mathcal{M}_{\mathbb{P}}(\Omega \times X, G)}
\def\FF{\mathcal{G}}
\newcommand{\Rmnum}[1]{\expandafter\@slowromancap\romannumeral #1@}
\begin{document}

\title{Variational principle for  random pressure function}

\author{Rui Yang$^{1,2}$, Ercai Chen$^{3}$ and Xiaoyao Zhou$^{*3}$
}
\address
{1.College of Mathematics and Statistics, Chongqing University, Chongqing 401331, P.R.China}
\address{2. Key Laboratory of Nonlinear Analysis and its Applications (Chongqing University), Ministry of Education}
\address
{3.School of Mathematical Sciences and Institute of Mathematics, Ministry of Education Key Laboratory of NSLSCS, Nanjing Normal University, Nanjing, 210023, Jiangsu, P.R.China}

\email{zkyangrui2015@163.com}
\email{ecchen@njnu.edu.cn}
\email{zhouxiaoyaodeyouxian@126.com}

\renewcommand{\thefootnote}{}

\footnotetext{*Corresponding author}

\subjclass[2010]{37A15, 37C45, 37D35, 37H05}

\keywords{Random dynamical system; random pressure function; variational principle; convex analysis approach; entropy-like quantities.}
 
\renewcommand{\thefootnote}{\arabic{footnote}}
\setcounter{footnote}{0}

\begin{abstract}
For random dynamical systems, by summarizing the fundamental properties of Kifer's topological pressure we introduce the concept of random pressure functions, and define Ruelle's metric entropy for invariant measures. Employing the techniques from convex analysis and ergodic theory, we establish a variational principle for random pressure functions. Consequently, this new variational principle allows us to establish a vital bridge between ergodic theory and topological dynamics. In particular, the variational principles for polynomial topological entropy in zero entropy systems, mean dimensions in infinite entropy systems, and  preimage entropy-like quantities in non-invertible dynamical systems are obtained.
\end{abstract}

%\date{\today}

\maketitle
\pagestyle{plain}  %\cancel the header
%\tableofcontents % contents
\section{Introduction}

People are always  concerned about the topological complexity of abstract dynamical systems. How to understand the dynamics of such systems is a longstanding theme in dynamical systems. This task is not easy to accomplish, as abstract dynamical systems usually possess certain structures, such as group structures, geometric structures, and measure-preserving structures. A quantitative approach to capturing the complexity of dynamical systems involves introducing entropy-like quantities in both topological and measure-theoretic settings, and establishing the corresponding variational principle for these quantities.  Then the tools from ergodic theory can be  invoked to study the dynamical behaviors of dynamical systems by the certain variational principles.   The classical variational principle (cf.\cite{w82}) asserts that the topological entropy equals the supremum of the measure-theoretic entropy over the set of invariant measures.

 Inspired by statistical mechanics, Ruelle and Walters \cite{rue73, wal75} extended the concept of topological entropy to the topological pressure of continuous potentials.  Given a continuous map  $T:X\rightarrow X$   on a compact metric space $(X,d)$ and a continuous potential $f: X\rightarrow \mathbb{R}$, the classical variational principle remains valid for topological pressure \cite{wal75}:
$$P(T,f)=\sup_{\mu \in \mathcal{M}(X,T) }\{h_{\mu}(T)+\int fd\mu\},$$ 
where  $P(T,f)$ denotes  the topological pressure of $f$, and $h_{\mu}(T)$  denotes  the measure-theoretic entropy of the  $T$-invariant  Borel probability measure $\mu$ on $X$.  The variational principles for topological pressure and topological entropy provide a fundamental bridge between topological dynamical systems and ergodic theory. It turns out that the variational principle has become a powerful tool in investigating equilibrium measure theory, thermodynamic formalism, chaotic phenomena in differential dynamical systems, and dimensional estimation in the multifractal analysis of dynamical systems. In real-world scenarios, dynamical systems are usually affected by random external perturbations. Based on families of random transformations and their iterates, the basic framework of random dynamical systems was established by Ulam and von Neumann \cite{uv45}, and further developed by Kakutani \cite{k50} in pursuing the random ergodic theorems. More precisely,  a \emph{continuous  random dynamical system on $X$}  over a measure-preserving system $(\Omega, \mathcal{F},\mathbb{P},\theta)$ is a map $T:G \times \Omega \times X \rightarrow X $ which is  measurable in $\omega \in \Omega$ and  is continuous in $x\in X$, and  for each fixed $\omega$, the set of maps $\{T(n,\omega)\}_{n\in G}$ forms a cocycle over $\theta$. Later, the authors \cite{bog92, kif01} introduced the concept of topological pressure for random dynamical systems, and  proved that the classical variational principle holds for Kifer's topological pressure.

According to the values of topological entropy, dynamical systems can be classified into zero entropy systems, finite positive entropy systems, and infinite entropy systems. The larger the entropy value of a system, the more complicated the system is. Topological polynomial entropy has been introduced in \cite{kt97} as a tool to distinguish zero entropy systems, and further reveals the dynamical behaviors hidden in zero entropy systems.  In \cite[Section 8]{GJ16}, Gr\"{o}ger and J\"{a}ger  posed  the following question:

\emph{Question 1: Does the classical variational principle hold  for polynomial topological entropy?}

Due to the absence of a corresponding measure-theoretic entropy counterpart to polynomial topological entropy, this question is still open for zero entropy systems. Different from zero entropy systems, infinite entropy systems exhibit highly complicated dynamics, yet little extra information about the systems can be obtained from their infinite entropy alone. Mean dimension \cite{gromov} and metric mean dimension \cite{lw00} are two key quantities for understanding the geometric and topological structures of infinite entropy systems. This naturally asks whether the classical variational principle holds for mean dimensions. There has been no progress on this question until Lindenstrauss and Tsukamoto's pioneering work appeared in 2018. In \cite{lt18}, they introduced the rate-distortion function from information theory, and established a variational principle for the (upper) metric mean dimension:
\begin{align*}
{ \rm \overline{mdim}}_M(T,X,d)&=\limsup_{\epsilon \to 0}\frac{1}{\logf}\sup_{\mu \in \mathcal{M}(X,T)}R_{\mu, L^{\infty}}(\epsilon)
\end{align*}
where ${\rm \overline{mdim}}_M(T,X,d)$ denotes the upper metric mean dimension of $X$,  and $R_{\mu, L^{\infty}}(\epsilon)$ is the $L^{\infty}$-rate distortion function of $\mu$. Comparing the Lindenstrauss-Tsukamoto variational principle with the classical ones, unfortunately, for certain dynamical systems \cite[Section VIII]{lt18}, the strict inequality $\sup_{\mu \in \mathcal{M}(X,T)}\{\limsup_{\epsilon \to 0}\frac{1}{\logf}R_{\mu, L^{\infty}}(\epsilon)\}<{\rm \overline{mdim}}_M(T,X,d)$
can happen for $L^{\infty}$ rate-distortion functions if we exchange the order of $\sup_{\mu \in \mathcal{M}(X,T)}$ and $\limsup_{\epsilon \to 0}$. Later, for systems with the marker property,  Lindenstrauss and Weiss  \cite{lt19} further established a  double variational principle for mean dimension:
\begin{align*}
{\rm mdim}(X,T)
=\min_{d\in \mathscr{D}(X)} \sup_{\mu \in \mathcal{M}(X,T)}\{\limsup_{\epsilon \to 0}\frac{1}{\logf}R_{\mu, L^{1}}(\epsilon)\},
\end{align*}
where ${\rm mdim}(X,T)$ denotes  the mean dimension of $X$,  $R_{\mu, L^{1}}(\epsilon)$ is the $L^{1}$-rate distortion function of $\mu$, and  $\mathscr{D}(X)$ is the set of all compatible metrics on $X$. The  marker property implies  aperiodicity. However, there exist periodic dynamical systems without the marker property for which the double variational principle still holds.  Therefore, the variational principles for mean dimensions are far from being established for dynamical systems with infinite entropy. Beyond this, the mean dimensions of amenable groups have been studied beyond $\mathbb{Z}$-actions \cite{cdz22,glt16}, and it is well-known that the classical variational principle holds for amenable group actions \cite{op82, st80}. It is natural to ask the following question:

\emph{Question 2: How to establish the classical variational principle  for mean dimensions of amenable groups without additional assumption?}

Notice that for any continuous self-map $T$ on $X$, the cardinality of  the preimage  $T^{-n}x$ of $x$ is  generally  not  a singleton and may even be uncountable. This means that the backward orbits  exhibit   rich preimage structures.  To characterize this kind of ``non-invertibility'' and how ``non-invertibility'' contributes to entropy, researchers have introduced various preimage entropy-like quantities from both topological and measure-theoretic perspectives, based on the growth rate of preimages \cite{lp92, hur95, np99, cn05, wz21, w22}.
As for the variational principle for  preimage entropy-like quantities,  Cheng and Newhouse  \cite{cn05}  defined the topological preimage entropy of  $X$ as
$${h}_{pre}(T)=\lim\limits_{\epsilon \to 0}\limsup_{n \to \infty}\limits \frac{1}{n}\sup_{x\in X,{k\geq n}}\limits \log s_n(T, T^{-k}x, \epsilon),$$
where $s_n(T, Z, \epsilon)$  denotes  the maximal cardinality of  $(n,\epsilon)$-separated sets for a non-empty subset $Z$ of $X$. Cheng-Newhouse's variational principle \cite{cn05} states that
$${h}_{pre}(T)=\sup_{\mu \in \mathcal{M}(X,T)}{h}_{pre,\mu}(T),$$ where  ${h}_{pre,\mu}(T)$ is the  measure-theoretic  preimage entropy of $\mu$. 
Unfortunately, the authors in \cite{w23, syz23} showed that there exist certain non-invertible dynamical systems such that
$\sup_{\mu \in \mathcal{M}(X,T)}{h}_{pre,\mu}(T)<{h}_{pre}(T)$. Two other types of pointwise preimage entropy were introduced in \cite{hur95}:
\begin{align*}
{h}_m(T)&=\lim\limits_{\epsilon \to 0}\limsup_{n \to \infty}\limits \frac{1}{n}{\sup_{x\in X}\limits} \log s_n(T, T^{-n}x, \epsilon),\\
{h}_p(T)&={\sup_{x\in X}\limits}\lim\limits_{\epsilon \to 0}\limsup_{n \to \infty}\limits \frac{1}{n} \log s_n(T, T^{-n}x, \epsilon).
\end{align*}  
In 2021, Wu and Zhu \cite{wz21} introduced the pointwise measure-theoretic preimage entropy, and they \cite[Theorem B and Corollary A.1]{wz21} established the following variational principles for pointwise preimage entropy:
$${h}_{p}(T)={h}_{m}(T)=\sup_{\mu \in \mathcal{M}(X,T)}{h}_{m,\mu}(T)<\infty$$
if the non-invertible systems satisfy the uniform separation of preimages. 
For certain non-invertible random dynamical systems, this work was extended to the concept of random pointwise preimage pressure in \cite[Theorem B]{wwz23}. These existing results motivate us to define the proper preimage measure-theoretic entropies such that the classical variational principles hold for the aforementioned three types of preimage entropy-like quantities, without imposing additional assumptions on preimage sets. In particular, we introduce the concept of random preimage metric mean dimension for non-invertible random dynamical systems with infinite entropy. The techniques in \cite{cn05, wz21, wwz23} cannot be adapted to such non-invertible systems for obtaining the   classical variational principle of random preimage metric mean dimension.

\emph{Question 3: For every non-invertible random dynamical system, how to establish the classical variational principle for preimage entropies and preimage metric mean dimensions without imposing the condition of uniform separation of preimages?}

It is the missing counterpart of the measure-theoretic entropy  that renders it difficult to establish proper variational principles for these entropy-like quantities in dynamical systems. Without invoking the underlying dynamics, Bis et al. \cite{bcmp22} established an abstract variational principle for a class of pressure functions on metric spaces in terms of  \emph{finitely additive probability measures}. Inspired by \cite{bcmp22} and Ruelle's idea of topological pressure determining  measure-theoretic entropy  \cite{rue73}, we develop a convex analysis approach to random pressure functions in the context of random dynamical systems, and establish a variational principle for random pressure functions involving \emph{invariant measures}.  This variational principle offers a new perspective on bridging ergodic theory and topological dynamics.

\begin{thm}\label{thm 1.1}
Let $G=\mathbb{Z}_{+}^{k}$, $k\geq 1$, or $G$ be a countable  infinite discrete amenable group, and  let   $\Omega$ be a locally compact, separable metric space  with Borel $\sigma$-algebra $\mathcal{B}(\Omega)$. Let $T$ be an amenable random   (or $\mathbb{Z}_+^k$) dynamical system on a compact metric space $X$  over  an ergodic  measure-preserving system $(\Omega, \mathcal{B}(\Omega),\P, G)$.   If $\Gamma$ is a random pressure  function on $L_{\mathbb{P}}^1(\Omega, C(X))$, then  
$$ \Gamma(0)=\max_{\mu \in \M}s(\mu),$$
where  
 \begin{align*}
s(\mu)&=\inf\{\Gamma(f)-\int fd\mu:f\in L_{\mathbb{P}}^1(\Omega, C(X))\}\\
&= \inf_{f\in \mathcal{A}}\int fd\mu,
\end{align*}
and  $\mathcal{A}=\{ f\in L_{\mathbb{P}}^1(\Omega, C(X)):\Gamma(-f)=0\}$. Furthermore, $s(\mu)$ is a concave and  upper semi-continuous function on the  set $\M$ of  invariant probability measures on $\Omega \times X$. 
\end{thm}

We remark that the  precise definitions of $\Gamma(f)$ and $s(\mu)$ are given in  Section \ref{sec 3}, and the application of Theorem \ref{thm 1.1} to answering the aforementioned three questions is provided in Section \ref{sec 4}. As for the proof of Theorem \ref{thm 1.1}, it differs from the known techniques in \cite{rue73, w82, bcmp22}. For the convenience of readers, we outline the key ideas and tools used in our proof as follows. The fundamental idea for proving Theorem \ref{thm 1.1} is to use the properties of random pressure functions to construct a continuous linear functional, and then link it to an invariant measure with a marginal on the base space. The technical challenges are twofold:

(1) The \emph{Riesz Representation Theorem} does not work in our setting because we need to handle random continuous functions rather than ordinary continuous functions.

(2) The \emph{relativized method} (a modification of Misiurewicz's technique \cite{mis75}), which is used to construct invariant measures for random dynamical systems (as developed in \cite{bog92, kif01}), only applies to measure-theoretic entropy and not to integrals of probability measures. Additionally, unlike $\mathbb{Z}$-actions of dynamical systems, constructing invariant measures with a specific marginal on the base system is critical for random dynamical systems.

These difficulties are overcome by developing a set of novel ideas and introducing some new tools. For (1), we introduce the \emph{Stone vector lattice} to replace the Riesz Representation Theorem. We show that the linear functional that is generated by the \emph{Separation Theorem for Convex Sets} is a \emph{pre-integral} on a space of bounded functions, which ensures the construction of probability measures. For (2), since the base space $\Omega$ may not be compact, we use \emph{outer measure theory} to lift the probability measures obtained from step (1) to Radon measures. Then the convergence of Radon measures allows us to get a probability measure on a Borel $\sigma$-algebra. Finally, we apply \emph{Von Neumann's $L^1$-Ergodic Theorem} to verify that the marginal of this measure on the base space coincides with the given probability measure  $\mathbb{P}$.

The rest of this paper is organized as follows. In section 2,  we review the basic setting  of random dynamical systems and  some  relevant concepts.  In section 3, we introduce the concept of random pressure function and  prove Theorem \ref{thm 1.1}.  In section 4,  we exhibit several applications of  Theorem \ref{thm 1.1}, and answer the Questions 1-3 (corresponding to Theorems A-D).

\section{Preliminary}

In this section, we briefly recall the framework of random dynamical systems and related concepts. A systematic treatment of random dynamical system theory can be found in several monographs \cite{arn98,kif86,cra02,l01,dz15}. 

\subsection{The setup of   amenable random dynamical systems and examples}

Let $G$ be a  group. For a subset $A\subset G$, $|A|$ denotes the cardinality of  $A$. The   symmetric difference of the subsets $A, B \subset G$ is defined as  $A \triangle  B=(A\backslash B)\cup (B\backslash A).$  A group $G$ is called an \emph{amenable group} if there exists  a sequence  $\{F_n\}_{n\geq 1}$ of non-empty finite subsets of $G$ such that  for  every  $g\in G$,
$$\lim_{n\to\infty}\frac{|gF_n\triangle F_n|}{|F_n|}=0.$$

Such a sequence  $\{F_n\}_{n\geq 1}$  is called  a F\o lner sequence of $G$.  Examples of  amenable groups include  all finite groups, Abelian groups, and  all finitely generated groups with sub-exponential growth; free group of rank 2 is non-amenable.    Throughout this  paper,  $G$ is assumed to be either $G=\mathbb{Z}_+^k:=\{(x_1,x_2,...,x_k):x_j \geq 0 ~\forall 1\leq j\leq k\}$(a $k$-dimensional positive integer lattice semigroup with the usual addition)  or   a countable  infinite discrete amenable group.

An  action of $G$ on  a  set $X$  is  a  map  $\alpha: G\times X \rightarrow X$ satisfying
\begin{enumerate}
\item   $\alpha(e_G,x)=x$ for every $x\in X$; 
\item  $\alpha(g_1,\alpha(g_2,x))=\alpha(g_1g_2,x)$ for every $g_1,g_2\in G$ and $x\in X$,
\end{enumerate}
where $e_G=(0,...,0)$ if $G= \mathbb{Z}_+^k$.

For convenience, $\alpha(g,x)$ is often written as $gx:=\alpha(g,x)$, or $\alpha_gx:=\alpha(g,x)$ for every fixed $g$. If  $G$ acts continuously on a compact metric space $X$,  the pair $(X,G)$ is called a (topological) \emph{$G$-system};  if  $G$ acts  measurably on   a probability space $(\Omega, \mathcal{F},\mathbb{P})$ and  $\mathbb{P}(A)=\mathbb{P}(g^{-1}A)$ for all $A\in \mathcal{F}$ and $g\in G$,  the quadruple $(\Omega,\mathcal{F},\mathbb{P}, G)$  is called  a  \emph{measure-preserving $G$-system}. If $G=\mathbb{Z}^k_+$, such systems are non-invertible; for general amenable groups, they are invertible. Additionally, we  remark that a $G$-system $(X,G)$ has  a measurable dynamical model: the  amenability of $G$ guarantees the existence of $G$-invariant Borel probability measures on $X$. Hence, $(X,\mu,\mathcal{B}(X),G)$ is a measure-preserving $G$-system, where $\mathcal{B}(X)$ is  the   Borel $\sigma$-algebra of  $X$.

An \emph{amenable random  dynamical system}(RDS for short) on a compact metric space $(X,d)$  over   a  measure-preserving system $(\Omega,\mathcal{F},\mathbb{P},G)$ is generated by a family of continuous  self-mappings $\{T_{g,\omega}:g\in G,\omega \in \Omega\}$ on  $X$  satisfying
\begin{enumerate}
\item  [(1)] $(\omega,x)\mapsto T_{g,\omega}x$ is measurable   with respect to the product $\sigma$-algebra $\mathcal{F}\otimes \mathcal{B}(X)$\footnote[1]
{The smallest $\sigma$-algebra on $\Omega \times X$ making   the canonical projections $\pi_{\Omega}:\Omega \times X \rightarrow \Omega$ and $\pi_X:\Omega\times X\rightarrow X$ measurable.}  of $\Omega \times X$;
\item [(2)] $T_{e_G,\omega}$ is the identity map on $X$ for all $\omega$;
\item [(3)] $T_{g_1g_2,\omega}=T_{g_2, g_1\omega}\circ T_{g_1,\omega}$ for all $g_1,g_2 \in G$ and $\omega$.
\end{enumerate}

For  $G=\mathbb{Z}_{+}^k$, such a system is called a \emph{random $\mathbb{Z}_{+}^k$-dynamical system}(or a non-invertible random dynamical system). The  measure-preserving system $(\Omega,\mathcal{F},\mathbb{P},G)$   originally models the  ``noise'' considered  in random partial differential equations and stochastic differential equations(cf.\cite{arn98}). Here, it can be viewed as "turbulence" affecting  the $G$-system $(X,G)$ via random choices of continuous transformations on  $X$. This is clarified by defining
$$\alpha:  G\times  \Omega \times X \rightarrow  X, (g,\omega,x)\mapsto \alpha(g,\omega,x):=T_{g,\omega}(x).$$

In particular, if $\Omega$ is a singleton (no ``noise''),  then the amenable random dynamical system reduces to a $G$-system $(X,G)$.

To study dynamics on $X$, we associate   a  $G$-action  on $\Omega \times X$ via  the \emph{skew product transformation}, defined as $\Theta_g(\omega,x)=(g\omega,T_{g,\omega}x)$, which forms  a   $G$-system $(\Omega\times  X,\mathcal{F}\otimes \mathcal{B}(X) ,\Theta, G)$ by  (3).

We present two examples to illustrate random dynamical systems:

\begin{ex}
Let $\mathbb{T}^2$ be the $2$-dimensional torus, and  let $(\Omega, \mathcal{B}(\Omega),\mathbb{P},\theta)$ be an ergodic  Polish system.  For a  measurable mapping $f: \Omega \rightarrow  \mathbb{T}^2$,  the mapping 
$$T_{\omega}x=Ax+f(\omega),~ x\in \mathbb{T}^2$$
generates a random dynamical system on  $\mathbb{T}^2$,
where $ A=\begin{pmatrix}
2 & 1\\
1 & 1
\end{pmatrix}$ is a hyperbolic matrix.
\end{ex}

\begin{ex}
Let $\Omega=\{1,...,k\}^{\mathbb{Z}_{+}}$ be  the full shift over $k$-symbols, with left shift $\sigma$ and  a $\sigma$-invariant measure $\mathbb{P}$ generated by a probability vector $(p_1,p_2,...,p_k)$.  This gives a measure-preserving driving system $(\Omega, \mathcal{B}(\Omega),\mathbb{P},\sigma)$.  Let $\widetilde{\sigma}:X^{\mathbb{N}}\rightarrow X^{\mathbb{N}}$  also be a left shift on the product of   a compact metric space $(X,d)$. For every $\omega=(\omega_0, \omega_1,...)$,  we define the map $T_{\omega}:X^{\mathbb{N}}\rightarrow X^{\mathbb{N}}$ given by $T_{\omega}(x)=\widetilde{\sigma}^{\omega_0}(x)$. Then the iterations
\begin{align*}
T_{\omega}^n=
\begin{cases}
T_{\sigma^{n-1}\omega}\circ T_{\sigma^{n-2}\omega}\circ\cdots \circ T_{\omega},  &\mbox{for}~n\geq 1\\
id,&\mbox{for}~n=0
\end{cases}
\end{align*} 
form a random  $\mathbb{Z}_{+}$-dynamical system.
\end{ex}

We next recall definitions related to invariant measures of random dynamical systems. Let $(X,d)$   be  a compact  metric space. The space of real-valued continuous functions on $X$ is denoted by   $C(X)$, which   becomes a Banach space under the supremum norm  $||\cdot||_{\infty}$.   
For each  real-valued function $f$ on $\Omega \times X$ that is measurable in $(\omega,x)$ and  continuous in $x$ ,  we set 
$$||f||:=\int ||f(\omega)||_{\infty}d\mathbb{P}({\omega}),$$
where $||f(\omega)||_{\infty}=\sup_{x\in X}|f(\omega,x)|$.  The measurability of $||f(\omega)||_{\infty}$  in $\omega$ follows from the separability of $X$ and  the measurability of $f$.  Let $L_{\mathbb{P}}^1(\Omega, C(X))$ denote  the  $L^1$-space of  random continuous functions with $||f||<\infty$.  For instance,
\begin{itemize}
\item [(1)] elements of  $ L^1(\Omega,\mathcal{F},\P)$  define    elements  of $L_{\mathbb{P}}^1(\Omega, C(X))$  via  $(\omega,x)\mapsto f(\omega)$;
\item [(2)] continuous functions $g \in C(X)$ also define  elements of $L_{\mathbb{P}}^1(\Omega, C(X))$ via $(\omega,x)\mapsto g(x)$; 
\item [(3)]  products $(\omega,x)\mapsto f(\omega)g(x)$ also lie in $L_{\mathbb{P}}^1(\Omega, C(X))$.
\end{itemize} 
The space $L_{\mathbb{P}}^1(\Omega, C(X))$ is a Banach space when identifying functions $f$ and $g$ with $||f-g||=0$.

Let $\pi_{\Omega}:\Omega \times X \rightarrow \Omega$ be  the  canonical projection  onto $\Omega$.  A probability measure on $(\Omega \times X, \mathcal{F}\otimes \mathcal{B}(X))$ has marginal $\mathbb{P}$ if  the push-forward  of $\mu$ under $\pi_\Omega$,   denoted by  $(\pi_{\Omega})_{*}\mu$, equals  $\P$. The set of such measures  is denoted by  $\mathcal{P}_{\mathbb{P}}(\Omega \times X)$.  By \cite[Proposition 3.6]{cra02}, any $\mu\in \mathcal{P}_{\mathbb{P}}(\Omega \times X)$ can  disintegrate as  $$d\mu(\omega,x)=d\mu_{\omega}(x)d\P(\omega),$$ where $\mu_{\omega}$ is a  family of conditional probability measures on $X$(unique up to $\P$-a.a $\omega$ equivalence). Endow  the space $\mathcal{P}_{\mathbb{P}}(\Omega \times X)$  with  the weak$^{*}$-topology,  the smallest topology  making  $\mu\mapsto \int fd\mu$ continuous  for all $f\in L_{\mathbb{P}}^1(\Omega, C(X))$. The space $\mathcal{P}_{\mathbb{P}}(\Omega \times X)$ is compact \cite[Lemma 2.1]{kif01}.   The set of invariant measures, denoted by $\mathcal{M}_{\mathbb{P}}(\Omega \times X, G)$, consists of  measures $\mu \in \mathcal{P}_{\mathbb{P}}(\Omega \times X)$  invariant  under the transformation $\Theta_g:\Omega \times X \rightarrow \Omega \times X$ for all $g\in G$.

\section{A convex approach to random pressure function of RDSs}\label{sec 3}

In this section, within the framework of random dynamical systems, we introduce axiomatic definitions for random pressure functions and the corresponding roles with respect to invariant measures. By developing a convex analysis approach to random pressure functions, we establish a variational principle for them (i.e., Theorem \ref{thm 1.1}).

\subsection{An axiomatic definition  for  random pressure functions}
In this subsection, we introduce the  abstract framework of random pressure function in random dynamical systems.

We begin by  reviewing the definition  of Kifer's topological pressure in random dynamical systems \cite{bog92,kif01,dz15}. 
 
For $F \in \FF(G)$ and  $\omega \in \Omega$,  we define  a family of  Bowen metrics  on $X$ defined by 
$$d_{F}^{\omega}(x,y)=\max_{g\in F}d(T_{g,\omega}x, T_{g,\omega}y),$$ 
where $x,y \in X$. The  Bowen open  ball  of  $x$ in $d_F^{\omega}$ with radius $\epsilon$ is  given by 
$$B_F^{\omega}(x,\epsilon)=\{y\in X: d_F^{\omega}(x,y)<\epsilon \}.$$
For $f\in L^1(\Omega, C(X))$,  we put
$$S_Ff(\omega,x)=\sum_{g\in F}f\circ\Theta_g(\omega,x)=\sum_{g\in F}f(g\omega,T_{g,\omega}x).$$
 
A set $E\subset X$ is an  \emph{$(\omega,F,\epsilon)$-separated set} if  for any two distinct $x,y \in E$, $d_{F}^{\omega}(x,y)>\epsilon$.
% a set $F\subset X$ is  an  \emph{$(\omega,F,\epsilon)$-spanning  set}  if  for any $x\in X$, there exists $y \in F$ such that $d_{F}^{\omega}(x,y)\leq \epsilon$.  
%Denote by $s_F(X,d,\omega,\epsilon)$  the maximal  cardinality  of {$(\omega,F,\epsilon)$-separated sets} of $X$. 

We define
\begin{align*}
P_F(G,f,d,\omega,\epsilon)=\sup\{\sum_{x\in E}e^{S_Ff(\omega,x)}: E~\mbox{is an }(\omega,F,\epsilon)\mbox{-separated set of }X\}.
\end{align*}
Slightly modifying the proof in \cite[Lemma 1.2]{kif01},   $ P_F(T,f,d,\omega,\epsilon)$  is measurable in $\omega$. Let $\{F_n\}$ be a F\o lner sequence of $G$. It is reasonable to define:
$$P(G,f,d,\{F_n\},\epsilon)=\limsup_{n \to \infty}\frac{1}{|F_n|}\int  \log P_{F_n}(G,f,d,\omega,\epsilon)d\P(\omega).$$

The random topological pressure of $f$  \cite{kif01,dz15} is defined by
$$P(G,f)=\lim_{\epsilon \to 0} P(G,f,d,\{F_n\},\epsilon).$$

\begin{rem}
\item [(1)] It is easy to see that $\lim_{\epsilon \to 0} P(G,f,d,\{F_n\},\epsilon)$ is independent of the choice of the compatible metrics on $X$ and the   F\o lner  sequences of $G$.  Here, it is a bit more involved to verify this independence. So we include a proof for completeness in the Appendix (cf. Proposition \ref{prop 5.3} for details). 
\item [(2)] Unlike the classical topological pressure \cite{w82,op82,st80},  the  random topological pressure $P(G,f)$  depends on the invariant measure $\mathbb{P}$ on $\Omega$.
\item [(3)]  When $f=0$, we let $h_{top}(G,X):=P(G,0)$ and  call $h_{top}(G,X)$ the random topological entropy of $X$.
\item [(4)] In particular,  if $\Omega$ is reduced to a  singleton, then the random topological pressure recovers the  classical topological entropy of amenable groups \cite{op82, st80}.
\end{rem}

The random topological pressure   satisfies the following fundamental properties.

\begin{prop}\label{Prop 3.2}
Let $T=(T_{g, \omega})$ be a RDS 
over the measure-preserving system $(\Omega,\mathcal{F},\mathbb{P},G)$. 
Let $f,g\in L^1(\Omega, C(X))$. Then  
\begin{enumerate}
\item If  $f\leq g$, then  $P(G,f)\leq P(G,g)$.
\item  $ P(G,f+c,d)=P(G,f)+c$ for  any $c\in \R$.
\item  $ h_{top}(G,X)-||f|| \leq P(G,f) \leq h_{top}(G,X)+||f|| $.
\item  $P(G,\cdot): L^1(\Omega, C(X))\longrightarrow \R\cup\{\infty\}$ is either finite value or constantly $\infty$.
\item If $P(G,f)<\infty$ for all  $f\in L^1(\Omega, C(X))$, then 
$$|P(G,f)-P(G,g)|\leq ||f-g||.$$
\item  $P(G,\cdot)$ is convex on $L^1(\Omega, C(X))$. 

%\item $\mathbb{E}\overline{\rm {mdim}}_M(T,f+g,d)\leq P(G,f)+P(G,g)$.
\item $P(G,f+g\circ \Theta_h-g)= P(G,f)$ for all $h \in G$.
%\item  If  $c\geq 1$, then $\mathbb{E}\overline{\rm {mdim}}_M(T,cf,d)\leq cP(G,f)$; if  $c\leq 1$, then $\mathbb{E}\overline{\rm {mdim}}_M(T,cf,d)\geq cP(G,f)$.
%\item $|P(G,f)|\leq \mathbb{E}\overline{\rm {mdim}}_M(T,|f|,d)$.
\end{enumerate}
\end{prop}

\begin{proof}
We prove these statements one by one.

(1) and (2) follow from the definitions of $P(G,\cdot)$. 

(3). Fix $\epsilon >0$. For all  $(\omega ,x) \in \Omega \times X$ and $F\in \FF(G)$, we have
$$e^{-\sum_{g\in F}||f(g\omega)||_{\infty}}\leq e^{S_Ff(\omega,x)}\leq e^{\sum_{g\in F}||f(g\omega)||_{\infty}}.$$

Then, by definitions and the $G$-invariance  of $\mathbb{P}$  we have
\begin{align*}
\int \log s_F(X,d,\omega,\epsilon) d\mathbb{P}(\omega) -|F| \cdot ||f||\leq& \int \log  P_F(G,f,d,\omega,\epsilon) d\mathbb{P}(\omega)\\
\leq& \int \log s_F(X,d,\omega,\epsilon) d\mathbb{P}(\omega) +|F| \cdot ||f||.
\end{align*} 
Dividing both side by $|F_n|$ and taking the $\limsup_{n \to \infty}$,  and then letting $\epsilon \to 0$,
this yields the desired  inequality.

(4). It is a direct consequence of (3): $h_{top}(G,X)<\infty$ if and only if $P(G,f)<\infty$ for all  $f\in L^1(\Omega, C(X))$.

(5).  Fix  $\epsilon >0$, $\omega \in \Omega$ and $F\in \FF(G)$. If $E\subset X$ is an  $(\omega,F,\epsilon)$-separated set of $X$, then we have
$$\sum_{x\in E}e^{S_Ff(\omega, x)}\leq \sum_{x\in E}e^{S_Fg(\omega, x)+\sum_{g \in F}||(f-g)(g\omega)||_{\infty}}.$$
Similar  to (3), we  deduce that  $P(G,f)\leq P(G,g)+||f-g||.$
Exchanging the role of $f$ and $g$, we get  $$P(G,g)\leq P(G,f)+||f-g||.$$
This shows that $P(G,\cdot)$ is  a Lipschitz map on  $L^1(\Omega, C(X))$.

(6). Let $p\in[0,1]$. Let $E$ be an  $(\omega, n,\epsilon)$-separated set of $X$. The  H\"{o}lder's inequality allows us to obtain
\begin{align*}
\sum_{x\in E}e^{pS_Ff(\omega, x)+(1-p)S_Fg(\omega, x)}\leq (\sum_{x\in E}e^{S_Ff(\omega, x)})^p(\sum_{x\in E}e^{S_Fg(\omega, x)})^{(1-p)}.
\end{align*}
Then it follows from this inequality that $P(G,pf+(1-p)g)\leq pP(G,f)+(1-p)P(G,g)$.

%(6). It  follows by the inequality
%$$\sum_{x\in F}(1/\epsilon)^{S_n(f+g)(\omega, x)}\leq \sum_{x\in F}(1/\epsilon)^{S_nf(\omega, x)}\cdot \sum_{x\in F}(1/\epsilon)^{S_ng(\omega, x)}.$$

(7). Fix  $\epsilon >0$, $\omega \in \Omega$. Let $\{F_n\}$ be a F\o lner sequence of $G$. For every $n\geq 1$ and $x\in X$, by the cocycle property $T_{g_1g_2,\omega}=T_{g_2, g_1\omega}\circ T_{g_1,\omega}$,  we have
$$S_{F_n}(g\circ \Theta_h -g)(\omega, x)=\sum_{s\in hF_n \triangle F_n } g(s\omega, T_{s,\omega}x).$$  If  $E$ is an  $(\omega, n,\epsilon)$-separated set of $X$, then we have
\begin{align*}
e^{-\sum_{s\in hF_n \triangle F_n }||(g(s\omega)||_{\infty}}\cdot\sum_{x\in E}e^{S_{F_n}f(\omega, x)}
\leq&\sum_{x\in E}e^{S_{F_n}(f+g\circ \Theta_h -g)(\omega, x)}\\
\leq &  e^{\sum_{s\in hF_n \triangle F_n }||(g(s\omega)||_{\infty}}\cdot \sum_{x\in E}e^{S_{F_n}f(\omega, x)}.
\end{align*}
This implies that  for every $n\geq 1$,
\begin{align*}
	&\int \log P_{F_n}(G,f,d,\omega,\epsilon) d\mathbb{P}(\omega) -|hF_n \triangle F_n|\cdot ||g||\\
	\leq& \int \log  P_{F_n}(G,f+g\circ \Theta_h-g,d,\omega,\epsilon) d\mathbb{P}(\omega)\\
	\leq& \int \log P_{F_n}(G,f,d,\omega,\epsilon) d\mathbb{P}(\omega) +|hF_n \triangle F_n|\cdot ||g||.
\end{align*} 
Notice that $\lim_{n\to\infty}\frac{|hF_n\triangle F_n|}{|F_n|}=0$, which allows us to deduce the desired equality.

%(8).  If $a_1,...,a_k$ are $k$ positive real numbers with $\sum_{i=1}^ka_i=1$, then $\sum_{i=1}^ka_i^c\leq1$ if $c\geq1$;   $\sum_{i=1}^ka_i^c\geq1$ if $c\leq1$;

%Let $0<\epsilon <1$ and $F$ be a $(\omega,n,\epsilon)$-separated set of $X$. Then we have
%$$\sum_{x\in F}(1/\epsilon)^{cS_nf(,\omega, x)}\leq (\sum_{x\in F}(1/\epsilon)^{S_nf(\omega, x)})^c$$ if $c\geq 1$;

%$$\sum_{x\in F}(1/\epsilon)^{cS_nf(\omega, x)}\geq (\sum_{x\in F}(1/\epsilon)^{S_nf(\omega, x)})^c$$ if $c\leq1$;

%We get the desired result  after taking the corresponding limits.

%(9).   Since $-|f|\leq f\leq |f|$, then by (1) we have $$\mathbb{E}\overline{\rm {mdim}}_M(T,-|f|,d)\leq P(G,f)\leq \mathbb{E}\overline{\rm {mdim}}_M(T,|f|,d).$$ By (8), we have  $\mathbb{E}\overline{\rm {mdim}}_M(T,-|f|,d)\geq -\mathbb{E}\overline{\rm {mdim}}_M(T,|f|,d)$. This shows (9).
\end{proof}
%\begin{rem} 
%Except  the  convexity mentioned in (5) and the property  (6) in Proposition \ref{prop 2.5}, the other  properties  are also valid for $\under$. Due to the proof of the Theorem 1.1   highly depending on the convex analysis techniques,  we have to restrict our attentions  to $\over$ in the next  section so that  a Ruelle-Walters formula for upper metric mean dimension can be established.
%\end{rem}

%\begin{enumerate}
%\item  [(1)] Monotonicity:  $P(T,f)\leq P(T,g)$  $\forall f,g\in  C(X)$ with $f\leq g$;
%\item [(2)] Translation invariance:  $P(T,f+c)=P(T,f)+c$ $ \forall f\in  C(X)$ and $c\in \mathbb{R}$;
%\item [(3)] Convexity:  $P(T,pf+(1-p)g)\leq pP(T, f)+(1-p)P(T,g)$ $ \forall f, g\in C(X)$ and $p\in [0,1]$.
%\end{enumerate}

%We first motivate the concept of random pressure functions for random dynamical systems. For  a deterministic system $(X,T)$, where $T$ is a continuous self-map  on  the compact metric space $(X,d)$,  topological pressure is a classical concept in thermodynamic formalism, which is systematically  discussed  in Walters' monograph \cite{w82}. As a function $P(T,\cdot):C(X)\rightarrow \mathbb{R}$,  it \cite[Theorem 9.7, p.214]{w82} possesses  the following fundamental properties:

%In fact, readers familiar with classical thermodynamic formalism will recognize that many pressure-like quantities in dynamical systems share these properties. To unify their treatment,
In  \cite[Definition 2.1]{bcmp22}, the authors introduced an axiomatic definition for pressure functions.  More precisely, let $(X,d)$ be a locally compact  metric space and  $\textbf{B}(X)$ be a Banach space  over $\mathbb{R}$. A map $\gamma: \textbf{B}(X) \rightarrow \mathbb{R} $ is a \emph{pressure function} if it satisfies  the following condition:
\begin{enumerate}
\item monotonicity:  $f\leq g$ $\Longrightarrow$ $\gamma(f)\leq \gamma(g)$ $ \forall f,g \in  \textbf{B}(X)$.
\item  Translation invariance:  $\gamma(f+c)= \gamma(f)+c$ $\forall f\in  \textbf{B}(X)$ and $c\in \mathbb{R}$;
\item Convexity:  $\gamma(pf+(1-p)g)\leq p\gamma(f)+(1-p)\gamma(g)$  $\forall f, g\in \textbf{B}(X)$ and $p\in [0,1]$.
\end{enumerate}
Based on the properties of random topological pressure established in Proposition \ref{Prop 3.2}, we formulate an axiomatic definition of \emph{random pressure functions} on $L_{\mathbb{P}}^1(\Omega, C(X))$.  Such a treatment  helps us emphasize the aforementioned questions using a unified approach. 
%Thus, readers familiar with classical thermodynamic formalism will recognize that many pressure-like quantities in (random) dynamical systems share these fundamental properties.

\begin{df}\label{Def3.1}
Let $T=(T_{g, \omega})$ be a RDS 
over the measure-preserving system $(\Omega,\mathcal{F},\mathbb{P},G)$. 
A  function $\Gamma: L_{\mathbb{P}}^1(\Omega, C(X))\rightarrow \mathbb{R}$  is  called a  \emph{random pressure function} if it satisfies the following conditions:
\begin{enumerate}
\item  Monotonicity:  $f\leq g \Longrightarrow \Gamma(f)\leq \Gamma(g)$ $ \forall f,g \in L_{\mathbb{P}}^1(\Omega, C(X))$;
\item  Translation invariance: $\Gamma(f+c)= \Gamma(f)+c $ $ \forall f \in L_{\mathbb{P}}^1(\Omega, C(X))$ and $ c\in \mathbb{R}$;
\item Convexity:    $\Gamma(pf+(1-p)g)\leq p\Gamma(f)+(1-p)\Gamma(g)$  $\forall f, g\in L_{\mathbb{P}}^1(\Omega, C(X))$ and $p\in [0,1]$;
\item Lipschitz: 
$|\Gamma(f)-\Gamma(g)|\leq ||f-g||$ $ \forall f,g \in L_{\mathbb{P}}^1(\Omega, C(X))$;
\item Semi-cohomology:  $\Gamma(f\circ\Theta_g)\leq \Gamma(f)$ $ \forall f \in L_{\mathbb{P}}^1(\Omega, C(X))$ and $g \in G$.
\end{enumerate}

\end{df}
\begin{rem}
\begin{enumerate}
\item [(i)] A random pressure function  $\Gamma$ satisfies the \emph{cohomology property} if  the equality  holds in (5).
\item [(ii)] Only the semi-cohomology property involves the dynamics; the remaining properties are independent of the choice of the specific random dynamical system.   Semi-cohomology property  plays a vital role in our proof  for establishing the variational principle for random pressure functions in terms of invariant measures.
\end{enumerate}
\end{rem}

\subsection{An axiomatic definition  for  the measure-theoretic quantity $s(\mu)$} 

Next, we explain how to define a proper measure-theoretic quantity for invariant measures using a random pressure function.

For an expansive dynamical system $(X,T)$ with  the specification property, Ruelle \cite[Theorem 5.1, p.245]{rue73} first established  a   variational principle for classical topological pressure:
$$P(T,f)=\max_{\mu \in \mathcal{M}(X,T) }\{s(\mu)+\int fd\mu\},$$
where  $s(\mu)=\inf_{f\in C(X)}\{P(T,f)-\int fd\mu\}$.

For any topological dynamical systems $(X,T)$ with finite topological entropy,  Walters \cite[Theorem  9.12]{w82} verified that  for any  $\mu_0 \in \mathcal{M}(X,T)$,   the entropy map $\mu \in \mathcal{M}(X,T)~\mapsto h_{\mu}(T)$  is  upper semi-continuous (u.s.c.) at   $\mu_0$ if and only if
$h_{\mu_0}(T)=s(\mu_0).$
This suggests that the measure-theoretic entropy of $\mu_0$ is completely determined by topological pressure in this case. Now, we inject the above ideas into the random dynamical systems, and introduce a measure-theoretic quantity for invariant measures.

%Our definition is strongly inspired by the work \cite{rue73,w82,bcmp22}. 
% that plays the role of measure-theoretic entropy in classical variational principles.
%Notice that the random pressure function is an abstract formulation, summarizing properties inherited from topological pressure. However, there is no natural measure-theoretic entropy-like quantity to link it with the random pressure function via a variational principle.  

%The following definition is   an analogous counterpart  of measure-theoretic entropy, using the idea that topological pressure determines measure-theoretic entropy.

\begin{df}\label{def 2.2}
Let $T=(T_{g, \omega})$ be a RDS 
over the measure-preserving system $(\Omega,\mathcal{F},\mathbb{P},G)$, and let $\Gamma$ be a random pressure function on $L_{\mathbb{P}}^1(\Omega, C(X))$.  For every  $\mu \in \mathcal{P}_{\mathbb{P}}(\Omega \times X)$,     the measure-theoretic  entropy  of  $\mu$  with respect to $\Gamma$  is defined as 
$$s(\mu):=\inf\{\Gamma(f)-\int fd\mu:f\in L_{\mathbb{P}}^1(\Omega, C(X))\}.$$
\end{df}

We now present equivalent characterizations of  $s(\mu)$ using specific convex subdomains of $L_{\mathbb{P}}^1(\Omega, C(X))$.

\begin{prop} \label{prop 2.4}
Under the setting of  Definition \ref{def 2.2},   for every  $\mu \in \mathcal{P}_{\mathbb{P}}(\Omega \times X)$,  we have
\begin{align*}
s(\mu)&=\inf_{f\in \mathcal{A}}\int fd\mu\\
&= \inf_{f\in \widetilde{\mathcal{A}}}\int fd\mu,
\end{align*}
where  $$\mathcal{A}=\{f\in L_{\mathbb{P}}^1(\Omega, C(X)):\Gamma(-f)=0\}$$
and 
$$\widetilde{\mathcal{A}}=\{f\in L_{\mathbb{P}}^1(\Omega, C(X)):\Gamma(-f)\leq0\}.$$
\end{prop}

\begin{proof}
We  first show  $ s(\mu)=\inf_{f\in \mathcal{A}}\int fd\mu$. This essentially follows from the  translation invariance    of $\Gamma$.  Indeed,   if $f\in \mathcal{A}$, one has
\begin{align*}
\int fd\mu=\Gamma(-f)-\int -fd\mu\geq s(\mu).
\end{align*}
Thus,  $ s(\mu)\leq\inf_{f\in \mathcal{A}}\int fd\mu$. 
If $f\in L_{\mathbb{P}}^1(\Omega, C(X))$, then $\Gamma(-(\Gamma(f)-f))=0$. This shows $ \Gamma(f)-f \in \mathcal{A}$ and hence
$$\Gamma(f)-\int fd\mu \geq  \inf_{f\in \mathcal{A}}\int fd\mu.$$
We get $ s(\mu)\geq\inf_{f\in \mathcal{A}}\int fd\mu$.

The first equality $ s(\mu)=\inf_{f\in \mathcal{A}}\int fd\mu$  implies that $s(\mu)\geq \inf_{f\in  \widetilde{\mathcal{A}}}\int fd\mu$.  Since $L_{\mathbb{P}}^1(\Omega, C(X))$ is a linear space, $-f \in L_{\mathbb{P}}^1(\Omega, C(X))$ whenever $f\in L_{\mathbb{P}}^1(\Omega, C(X))$.   This shows that
\begin{align*}
s(\mu )&=\inf\{\Gamma(f)-\int fd\mu:f\in L_{\mathbb{P}}^1(\Omega, C(X))\}\\
&=\inf\{\Gamma(-f)+\int f d\mu:f\in L_{\mathbb{P}}^1(\Omega, C(X))\}\\
&\leq \inf_{f\in  \widetilde{\mathcal{A}}}\int fd\mu.
\end{align*}
\end{proof}

Let $(\Omega,d)$ be a metric space.   A  function $f:\Omega\rightarrow \mathbb{R}$ is called $\emph{compactly supported}$  if its support 
$${\rm spt}(f):=\overline{\{\omega\in \Omega: f(\omega)\not=0\}}$$
is a compact subset of $\Omega$. The space of  compactly supported  continuous functions on $\Omega$ is denoted by $C_c(\Omega)$.   Note that  a continuous function  $f\in C_c(\Omega)$ if and only if there exists a compact subset $K\subset \Omega$ such that $f|_{\Omega \backslash K}=0$. Using this fact, $C_c(\Omega)$ is a real linear space.  By Urysohn's lemma,  $C_c(\Omega)$ is non-empty for  locally compact metric space $\Omega$. 

\begin{prop}\label{prop 2.10}
 Let $(\Omega,\mathcal{F},\mathbb{P})$ be a probability  space. The following subsets are dense in $L_{\mathbb{P}}^1(\Omega, C(X))$.

(1) The set of finite linear combinations of products of characteristic functions of measurable subsets of $\Omega$ and  continuous functions on $X$  is  dense in $L_{\mathbb{P}}^1(\Omega, C(X))$.

(2) The set of bounded functions of  $L_{\mathbb{P}}^1(\Omega, C(X))$ is  dense in $L_{\mathbb{P}}^1(\Omega, C(X))$.

(3) If  $\Omega$ is a locally compact, separable  metric space with  Borel $\sigma$-algebra $\mathcal{B}(\Omega)$, then $C_c(\Omega\times X)$ is dense in $L_{\mathbb{P}}^1(\Omega, C(X))$.
\end{prop}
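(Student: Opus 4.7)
I will prove the three statements in the bootstrap order (2), (1), (3). The common ingredient is that $X$ compact metric makes $C(X)$ separable and, by the same reduction to a countable dense subset of $X$ used to establish measurability of $\omega\mapsto\|f(\omega)\|_\infty$, also makes $\omega\mapsto\|f(\omega)-g\|_\infty$ measurable for every $g\in C(X)$. For (2), set $A_N=\{\omega:\|f(\omega)\|_\infty\le N\}$ and $f_N=f\chi_{A_N}$; this is measurable in $(\omega,x)$, continuous in $x$, and uniformly bounded by $N$. Since
\[
\|f-f_N\|=\int_{A_N^c}\|f(\omega)\|_\infty\,d\P(\omega)
\]
and $\|f(\omega)\|_\infty\in L^1(\Omega,\mathcal F,\P)$, dominated convergence yields $\|f-f_N\|\to 0$.

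\textbf{Part (1).} By (2), it suffices to treat $f$ with $\|f(\omega)\|_\infty\le N$ uniformly. Choose $\{g_k\}_{k\ge1}$ countable and dense in $C(X)$. Fix $\delta>0$ and define the measurable partition
\[
F_k=\{\omega:\|f(\omega)-g_k\|_\infty<\delta\}\setminus\bigcup_{j<k}\{\omega:\|f(\omega)-g_j\|_\infty<\delta\},
\]
which covers $\Omega$ by density. Setting $\phi_K=\sum_{k=1}^K\chi_{F_k}g_k$, splitting the integral over $F_1\cup\cdots\cup F_K$ (where the pointwise error is $<\delta$) and over its complement (where $\phi_K=0$ and $\|f(\omega)\|_\infty\le N$) gives
\[
\|f-\phi_K\|\le\delta+N\,\P\Bigl(\bigcup_{k>K}F_k\Bigr).
\]
Letting $K\to\infty$ followed by $\delta\to 0$ completes (1).

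\textbf{Part (3).} Because $\Omega$ is locally compact separable metric (hence $\sigma$-compact and Polish) and $\P$ is a finite Borel measure, $\P$ is Radon and so $C_c(\Omega)$ is dense in $L^1(\Omega,\mathcal B(\Omega),\P)$. Given $\delta>0$, apply (1) to obtain $\phi=\sum_{i=1}^M\chi_{A_i}g_i$ with $\|f-\phi\|<\delta/2$; then for each $i$ pick $h_i\in C_c(\Omega)$ with $\int|\chi_{A_i}-h_i|\,d\P<\delta/(2M(\|g_i\|_\infty+1))$. The function $\psi(\omega,x)=\sum_{i=1}^M h_i(\omega)g_i(x)$ is continuous on $\Omega\times X$ with support contained in the compact set $\bigl(\bigcup_i{\rm spt}(h_i)\bigr)\times X$, so $\psi\in C_c(\Omega\times X)$. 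The estimate $\|\chi_{A_i}g_i-h_ig_i\|\le\|g_i\|_\infty\int|\chi_{A_i}-h_i|\,d\P$ summed over $i$ gives $\|\phi-\psi\|<\delta/2$, hence $\|f-\psi\|<\delta$.

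\textbf{Main obstacle.} The delicate points I anticipate are: (i) the measurability of $\omega\mapsto\|f(\omega)-g\|_\infty$, resolved by reducing the supremum over $X$ to a countable dense subset; (ii) controlling the tail in (1), which is exactly why (2) is established first so that the tail term carries a uniform bound; and (iii) the $L^1(\P)$-density of $C_c(\Omega)$ invoked in (3), which hinges on Radon regularity of finite Borel measures on a locally compact second countable Hausdorff space.
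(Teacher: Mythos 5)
Your proof is correct, and the overall strategy is the same as the paper's: partition $\Omega$ according to proximity of $f(\omega,\cdot)$ to a countable dense subset $\{g_k\}$ of $C(X)$ for part (1), and use $\sigma$-compactness plus regularity of $\P$ (via Urysohn) to pass to $C_c(\Omega\times X)$ for part (3). The genuine difference is the dependency between (1) and (2). The paper proves (1) directly for arbitrary $f\in L^1(\Omega,C(X))$: after forming the partition and truncating at index $n_k$, it controls the tail $\int_{\Omega_{n_k}}\|f(\omega)\|_\infty\,d\P$ by absolute continuity of the integral, and then reads off (2) as an immediate corollary. You instead prove (2) first by direct truncation of $f$ on $\{\|f(\omega)\|_\infty>N\}$ and dominated convergence, and then exploit the resulting uniform bound $\|f(\omega)\|_\infty\le N$ to control the tail in (1) by the crude estimate $N\,\P\bigl(\bigcup_{k>K}F_k\bigr)\to 0$, avoiding the absolute-continuity step. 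Both routes are clean; yours trades one standard measure-theoretic lemma (absolute continuity) for another (dominated convergence) and makes the tail estimate in (1) slightly more elementary. In (3) you invoke the $L^1(\P)$-density of $C_c(\Omega)$ as a known fact, whereas the paper unwinds it via inner/outer regularity and Urysohn's lemma; this is the same argument packaged differently. One small point worth making explicit in your write-up: to assert $\psi\in L^1(\Omega,C(X))$, and more generally that $C_c(\Omega\times X)\subset L^1(\Omega,C(X))$, you should note (as the paper does) that $\Omega\times X$ is second countable, so $\mathcal B(\Omega)\otimes\mathcal B(X)=\mathcal B(\Omega\times X)$ and continuous functions are product-measurable.
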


\begin{proof}
(1). For $f\in L_{\mathbb{P}}^1(\Omega, C(X))$, we  define a map $$\Hat{f}: (\Omega ,\mathcal{F})\rightarrow  (C(X), \mathcal{B}(C(X)))$$ given by  $\Hat{f}(w)=f(\omega,\cdot)\in C(X)$,  where  $\mathcal{B}(C(X))$  is  the  Borel $\sigma$-algebra of $C(X)$ generated by all open balls of $C(X)$ formed by $$B(h,r)=\{g\in C(X): ||g-h||_{\infty}<r\}$$ with $h\in C(X), r>0$. Then $\Hat{f}$ is measurable. Indeed, let $E$ be  a  countable dense  subset  of $X$. Then  for all $\omega$,
\begin{align*}
||f(\omega, \cdot)-h||_{\infty}=\sup_{x\in X}|f(\omega, x)-h(x)|
=\sup_{y\in E}|f(\omega, y)-h(y)|.
\end{align*}
Hence 
\begin{align*}
\Omega\backslash \Hat{f}^{-1}(\overline{B}(h,r))
=&\{\omega \in \Omega: ||f(\omega, \cdot)-h||_{\infty}> r\}\\
=&\mathop{\bigcup}_{y\in E}\{\omega \in \Omega: |f(\omega, y)-h(y)|> r\}\in \mathcal{F},
\end{align*}
which implies that $\Hat{f}$ is  measurable.

Let  $\{g_n\}_n$ be a  countable dense subset of $C(X).$ For $n,k \geq 1$,  we define 
\begin{align*}
A_{n,k}&=\{g\in C(X):||g-g_n||_{\infty}<\frac{1}{k}\},\\
B_{1,k}&=A_{1,k}, ~B_{n,k}= A_{n,k}\backslash \bigcup_{j=1}^{n-1}A_{j,k}, ~ n\geq 2.
\end{align*}
Then  $\{\Hat{f}^{-1}(B_{n,k}):n\geq1\}$ is  a  measurable partition of $\Omega$.  For each $k\geq 1$, there exist  a  positive  integer  $n_k $  and  a $\P$-measurable set $\Omega_{n_k}=\Omega\backslash\bigcup_{j=1}^{n_k}\Hat{f}^{-1}(B_{j,k})$ such that  $\P(\Omega_{n_k})<\frac{1}{k}$. We set
\begin{align*}
h_k(\omega,x)
=\sum_{j=1}^{n_k}\chi_{\Hat{f}^{-1}(B_{j,k})}(\omega)g_j(x)
+\chi_{\Omega_{n_k}}\cdot 0.
\end{align*}
Therefore, 
\begin{align*}
||f-h_k||&=\int_{\Omega\backslash\Omega_{n_k}}||f(\omega)-h_k(\omega)||_{\infty}d\P(\omega)
+ \int_{\Omega_{n_k}}||f(\omega)||_{\infty}d\P(\omega)\\
&\leq\int_{\Omega_{n_k}}||f(\omega)||_{\infty}d\P(\omega)+ \frac{1}{k} . 
\end{align*}
Since $||f||<\infty$, by the absolute continuity of integrals one has   $\int_{\Omega_{n_k}}||f(\omega)||_{\infty}d\P(\omega) \to 0$  as $k$  goes to $\infty$.   This shows  (1).

(2) is a direct consequence of (1).

(3)  Notice that  the product $\sigma$-algebra  $\mathcal{B}(\Omega)\otimes \mathcal{B}(X)$ coincides with the Borel $\sigma$-algebra  $\mathcal{B}(\Omega \times X)$ since  $\Omega \times X$ has  a  countable topological basis. Clearly, $C_c(\Omega\times X)\subset L_{\mathbb{P}}^1(\Omega, C(X))$. By (1), it suffices to approximate functions of the form  $\chi_A\cdot g$, $A\in \mathcal{B}(\Omega)$ with elements of $C_c(\Omega\times X)$.

Since   $\Omega$ is a locally compact and separable  metric space, it is $\sigma$-compact.  By \cite[Theorem B.13, p.425]{ew13}, the probability measure $\P$ on $\Omega$ is regular, i.e., 
$$\P(A)=\sup_{K\subset A, ~K \text{compact~ set}}\P(K)= \inf_{A\subset U,~ U \text{open~set}}\P(U).$$  Fix  $\epsilon>0$ and  choose a  compact set $K$ and an open set $U$ with $K\subset A\subset U$  and $\P(U\backslash K)<\frac{\epsilon}{2M}$, where $M=\max_{x\in X}|g(x)|+1$. By  Urysohn's lemma for locally  compact space \cite[Theorem 6.19]{rud87}, there exists $f\in C_c(\Omega )$ such that $0\leq f(\omega)\leq1$, $f|_K\equiv1$ and  the compact support $\text{spt}(f)\subset U$.  Since $f(\omega)g(x)=0$ for any  $(\omega,x) \in \Omega \times X \backslash \text{spt}(f)\times \text{spt}(g)$, then $f\cdot g\in  C_c(\Omega\times X)$  and 
\begin{align*}
||\chi_Ag-fg||=\int_{\Omega}\sup_{x\in X}|\chi_A(\omega)g(x)-f(\omega)g(x)|\P(\omega)<\epsilon.
\end{align*}
This shows  that  $C_c(\Omega\times X)$ is  a dense subset of $L_{\mathbb{P}}^1(\Omega, C(X))$. 
\end{proof}

Compared with Proposition \ref{prop 2.4}, using Proposition \ref{prop 2.10} we can equivalently express  $s(\mu)$ by shrinking the function space  $L_{\mathbb{P}}^1(\Omega, C(X))$ to the  compactly supported function space $C_c(\Omega\times X)$ and  expanding the  range  of  $\Gamma(-\cdot)$ to non-positive values.

\begin{lem}\label{lem 2.6}
Suppose that  $\Omega$ is a locally compact and separable  metric space.
Let $T=(T_{g,\omega})$ be a RDS over the measure-preserving system $(\Omega,\mathcal{B}(\Omega),\mathbb{P},G)$, and let $\Gamma$ be a random pressure function on $L_{\mathbb{P}}^1(\Omega, C(X))$.  Then   for any  $\mu \in \mathcal{P}_{\mathbb{P}}(\Omega \times X)$, 
$$s(\mu)=\inf_{f\in \widetilde{\mathcal{A}_c}}\int fd\mu,$$ where $ \widetilde{\mathcal{A}}_c=\{f\in C_c(\Omega\times X):\Gamma(-f)\leq0\}$.
\end{lem}

\begin{proof}
Fix   $\mu \in \mathcal{P}_{\mathbb{P}}(\Omega \times X)$. It  follows from Proposition \ref{prop 2.4} that we  only need to show 
$$s(\mu)= \inf_{f\in \widetilde{\mathcal{A}}}\int fd\mu
\geq \inf_{f\in \widetilde{\mathcal{A}_c}}\int fd\mu.$$

 The map $f \in L_{\mathbb{P}}^1(\Omega, C(X))\mapsto \int fd\mu$  is continuous. Indeed,  for any $f_1,f_2\in   L_{\mathbb{P}}^1(\Omega, C(X))$,   one has
\begin{align*}
|\int f_1d\mu-\int f_2d\mu|&\leq \int_{\Omega}\int_{X}|f_1(\omega,x)-f_2(\omega,x)|d\mu_{\omega}(x)d\P(\omega)\\
&\leq \int||(f_1-f_2)(\omega)||_{\infty}d\P(\omega)\\
&=||f_1-f_2||.
\end{align*}
Let  $f\in \widetilde{\mathcal{A}}$ and $\gamma >0$. Proposition \ref{prop 2.10},(3)  allows us to choose  a  sequence   $\{f_n\}$ of  compactly supported continuous functions of $L_{\mathbb{P}}^1(\Omega, C(X))$  such that $f_n \to f+\frac{\gamma}{2}$.  Then 
$$\int f_nd\mu  \to \int f+\frac{\gamma}{2}d\mu,$$ and by the Lipschitz and translation invariance properties of  $\Gamma$, this yields that
$$\Gamma(-f_n) \to \Gamma(-(f+\frac{\gamma}{2}))<0.$$
Thus,  for sufficiently large  $N$ one has $f_{N}\in \widetilde{\mathcal{A}}_c$  and  $\int f_{N}d\mu  < \int f d\mu+\gamma$. So  
$\inf_{f\in\widetilde{\mathcal{A}_c}}\int fd\mu\leq  \int f d\mu$, which implies the desired inequality. 
\end{proof}

\subsection{Variational principle of random pressure function}
In this subsection,  we introduce several powerful tools and prove Theorem \ref{thm 1.1}.

Notice that $s(\mu)$ differs significantly from (fiber) measure-theoretic entropy. Consequently, the methods used in previous works \cite{rue73,w82,kif01,bcmp22} cannot be directly applied here, as we are now dealing with integrals of invariant measures rather than measure-theoretic entropy. Additionally, in the random setting, we need to construct invariant measures with marginal $\mathbb{P}$, which is not involved in the deterministic systems \cite{rue73,bcmp22}.  The difficult part  for us is to get the inequality:
$$ \Gamma(0)\leq\max_{\mu \in \M}s(\mu).$$ 
The central problem  is  how to  construct a  continuous linear  functional using  the properties of the random pressure function and relate it to a probability measure on $\Omega \times X$ with marginal $\mathbb{P}$.

This can be achieved through the following steps:\\
(a) Utilize the separation theorem for convex sets to obtain a linear functional.\\
(b) Introduce the \emph{Stone vector lattice} to handle random continuous functions, and use the \emph{pre-integral} in place of the Riesz representation theorem.\\
(c) Employ \emph{outer measure theory} to resolve issues arising from the convergence of $M(\Omega \times X)$, and   obtain a probability measure with   marginal  $\mathbb{P}$ on $\Omega$ satisfying the desired inequality.

Before proceeding, we first invoke the three powerful tools mentioned above.

\subsubsection{Some auxiliary  tools}

The following \emph{separation theorem for convex sets}, presented in \cite[p.417]{ds88}, enables the separation of two disjoint closed convex sets by a suitable linear functional.

\begin{lem}\label{lem 3.4}
Let $V$ be  a locally convex  real linear Hausdorff
topological space. Suppose that $K_1, K_2$ are disjoint closed   convex subsets, and  $K_1$ is compact. Then  there exists a continuous real-valued  linear  functional $L$ on $V$ such that $L(x)<L(y)$ for any $x\in K_1$ and $y\in K_2$. 
\end{lem}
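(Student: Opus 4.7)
The plan is to reduce the strict separation of two convex sets to separating a single convex set from the origin, and then to invoke the analytic Hahn--Banach theorem via a Minkowski functional. First I would form the auxiliary set $A := K_2 - K_1$, which is convex as a Minkowski combination of convex sets and avoids $0$ because $K_1 \cap K_2 = \emptyset$. The key preliminary fact is that $A$ is closed in $V$: given a net $y_\alpha - x_\alpha \to z$ with $x_\alpha \in K_1$ and $y_\alpha \in K_2$, the compactness of $K_1$ produces a convergent subnet $x_{\alpha'} \to x \in K_1$, whence $y_{\alpha'} = x_{\alpha'} + (y_{\alpha'} - x_{\alpha'}) \to x + z$, and the closedness of $K_2$ forces $x + z \in K_2$, so $z \in A$.

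Since $A$ is a closed convex set not containing $0$, local convexity of $V$ allows me to pick an open convex balanced neighborhood $U$ of $0$ disjoint from $A$. Next I would strictly separate the open convex set $U$ from the convex set $A$: applying the analytic Hahn--Banach extension theorem to the gauge (Minkowski) functional $p_U$, together with a one-dimensional linear functional defined along the direction of any fixed point $a_0 \in A$, I obtain a continuous real-valued linear functional $L$ on $V$ and a constant $c \in \R$ with
\[
L(u) < c \le L(a) \quad \text{for every } u \in U \text{ and } a \in A.
\]
Because $0 \in U$ we have $0 = L(0) < c$, and hence $L(a) \ge c > 0$ for every $a \in A$.

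Unfolding $A = K_2 - K_1$ yields $L(y) - L(x) \ge c > 0$ for every $x \in K_1$ and $y \in K_2$, so $L(x) < L(x) + c \le L(y)$ uniformly on $K_1 \times K_2$, which is exactly the desired strict separation.

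The main obstacle I expect is the closedness of $A = K_2 - K_1$: the difference of two merely closed sets in a topological vector space is generally not closed, and it is precisely the compactness of $K_1$ that rescues the argument. Closely related is the need to produce an open convex neighborhood $U$ of $0$ disjoint from $A$, which requires the full strength of the local convexity of $V$. Once these two inputs are in hand, the Hahn--Banach step via $p_U$ is routine, because $p_U$ is a continuous sublinear majorant on $V$ and the functional to be extended is controlled by $p_U$ on a one-dimensional subspace; the passage from non-strict to strict separation on $K_2$ comes for free from the positivity of $c$.
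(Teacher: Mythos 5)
The paper itself offers no proof of this lemma---it is quoted verbatim from Dunford--Schwartz---so your proposal is measured against the standard argument. Your reduction is exactly that standard argument and the first half is correct: $A=K_2-K_1$ is convex, $0\notin A$ by disjointness, the net argument using compactness of $K_1$ and closedness of $K_2$ correctly shows $A$ is closed, and local convexity then yields an open convex (balanced, if you like) neighborhood $U$ of $0$ with $U\cap A=\emptyset$.

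The genuine gap is in the central Hahn--Banach step. You propose to extend a one-dimensional functional along $a_0\in A$ subject to domination by the gauge $p_U$ of $U$. That only separates $U$ from the single point $a_0$: the extension satisfies $L\le p_U$ on $V$ and $L(a_0)=1$, and since $L\le p_U$ is an upper bound (and the fact $p_U\ge 1$ on $A$ is likewise only an ``upper-bound-type'' statement), nothing forces $L(a)\ge c>0$ for the other points $a\in A$. Concretely, in $V=\R^2$ with $U$ the open unit disc, $A=\{(x,y):x\ge 1\}$ and $a_0=(1,5)$, the functional $L(x,y)=(x+5y)/26$ satisfies $L\le p_U$ and $L(a_0)=1$, yet $L$ takes negative values on $A$; so the displayed inequality $L(u)<c\le L(a)$ for all $a\in A$ does not follow from the construction as you describe it. The repair is standard but it is a different gauge: fix $a_0\in A$, set $C:=U-A+a_0$, which is an open convex neighborhood of $0$ (it contains $U$) with $a_0\notin C$ (else $u=a$ for some $u\in U$, $a\in A$), define $f(ta_0)=t$ on $\R a_0$, check $f\le p_C$ there, and extend by Hahn--Banach to $L\le p_C$; continuity follows from $|L|\le 1$ on $C\cap(-C)$, and for $u\in U$, $a\in A$ one gets $L(u)-L(a)+1=L(u-a+a_0)\le p_C(u-a+a_0)<1$, i.e.\ $L(u)<L(a)$, whence $\inf_A L\ge\sup_U L>0$ and your final unfolding of $A=K_2-K_1$ goes through. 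Equivalently, at that point you may simply invoke the first separation theorem (open convex set versus a disjoint convex set), whose proof is precisely this $p_C$ construction; but domination by $p_U$ alone, as written, does not suffice.
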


By the assumption on $\Omega$ in  Theorem \ref{thm 1.1}, $\Omega\times X$ is a locally compact metric space. Although the Riesz representation theorem can be applied to construct a Radon measure on $\Omega \times X$  for which  weak$^{*}$ convergence  holds for  all  $f\in C_c(\Omega \times X)$,  the composition of    compactly supported continuous functions  on $\Omega \times X$   with the iterations of the skew product transformation 
$\Theta_g$  may not lie in $C_c(\Omega \times X)$. This is because  the skew product transformation $\Theta_g$ is merely measurable.  To overcome this difficulty, we introduce the \emph{pre-integral} to replace the role of the Riesz representation theorem.

Let $\mathcal{L}$ denote a family of real-valued functions on a set $X$.  $\mathcal{L}$ is called a \emph{vector lattice} if  it  is   a  linear space, and  $f\vee g:=\max\{f,g\}\in \mathcal{L}$ for all $f,g \in  \mathcal{L}$. Additionally,  $\mathcal{L}$  is said to be a \emph{Stone vector lattice} if  it is a vector lattice  and   $f\wedge 1:=\min\{f,1\}\in \mathcal{L}$ for all $f \in  \mathcal{L}$.  

Given a set $X$ and a vector lattice $\mathcal{L}$ on $X$, a function $L:\mathcal{L} \rightarrow \mathbb{R}$ is called a  \emph{pre-integral} if it is linear, positive, and satisfies $L(f_n)\downarrow 0$ whenever $f_n\in \mathcal{L}$ and $f_n(x)\downarrow 0$ for all $x\in X$. 

The following lemma, from \cite[Theorem 4.5.2]{bud02}, serves as an analogue of the Riesz representation theorem in a more general framework:
\begin{lem}\label{lem 3.5}
Let $X$ be a set, and  let $L$ be a pre-integral  on a  Stone vector lattice  $\mathcal{L}$. Then  there exists a measure $\mu$ on $(X,\sigma(\mathcal{L}))$  such that for all  $f\in \mathcal{L}$,
$$L(f)=\int fd\mu,$$
where $\sigma(\mathcal{L})$ is  the smallest  $\sigma$-algebra  on $X$ such that all functions in  $\mathcal{L}$ are measurable.
\end{lem}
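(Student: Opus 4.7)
The plan is to prove this Daniell--Stone type representation theorem by the classical outer measure construction, extending $L$ in stages and then producing $\mu$ via Carath\'eodory's procedure. First I would introduce the \emph{upper class} $\mathcal{L}^{\uparrow}$ consisting of functions $f:X\to(-\infty,\infty]$ that are pointwise increasing limits $f=\lim_n f_n$ of sequences in $\mathcal{L}$, and set $L(f):=\lim_n L(f_n)\in(-\infty,\infty]$. The key well-definedness fact---that $f\le g$ in $\mathcal{L}^{\uparrow}$ implies $L(f)\le L(g)$---uses the pre-integral continuity axiom $L(h_k)\downarrow 0$ whenever $h_k\downarrow 0$ in $\mathcal{L}$, applied to the lattice members $(f_n-g_k)^{+}$ (which lie in $\mathcal{L}$ by the vector lattice property and decrease to $0$ pointwise as $k\to\infty$ for each fixed $n$). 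The extended $L$ is additive, positively homogeneous, and monotone on $\mathcal{L}^{\uparrow}$.

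Next I would define the outer measure
\[
\mu^{*}(A):=\inf\{L(u):u\in\mathcal{L}^{\uparrow},\ u\ge\chi_{A}\},\qquad A\subseteq X,
\]
and verify by standard manipulations that $\mu^{*}$ is monotone and countably subadditive (using closure of $\mathcal{L}^{\uparrow}$ under countable sums of nonnegative members, which follows from $\vee$-closure and the monotone continuity of $L$), with $\mu^{*}(\emptyset)=0$. Carath\'eodory's theorem then yields a $\sigma$-algebra $\mathcal{M}$ of $\mu^{*}$-measurable sets on which $\mu:=\mu^{*}|_{\mathcal{M}}$ is a measure.

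The Stone condition $f\wedge 1\in\mathcal{L}$ enters precisely at the step $\sigma(\mathcal{L})\subseteq\mathcal{M}$. For $f\in\mathcal{L}$ and $c>0$, the functions $g_{n}:=\bigl(n(f-c)^{+}\bigr)\wedge 1$ belong to $\mathcal{L}$ (by the vector lattice and Stone properties) and increase to $\chi_{\{f>c\}}$, placing this indicator in $\mathcal{L}^{\uparrow}$; a symmetric argument on the complement shows $\{f>c\}\in\mathcal{M}$. Hence every $f\in\mathcal{L}$ is $\mathcal{M}$-measurable, so $\sigma(\mathcal{L})\subseteq\mathcal{M}$. To conclude $L(f)=\int f\,d\mu$ for $f\in\mathcal{L}$, I would first treat $f\ge 0$ via the layer-cake representation $\int f\,d\mu=\int_{0}^{\infty}\mu(\{f>c\})\,dc$, matching each $\mu(\{f>c\})$ to $L$ of the monotone approximations $g_{n}$ above and passing to the limit by monotone convergence, then decompose a general $f=f^{+}-f^{-}$ with $f^{\pm}\in\mathcal{L}$.

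The main obstacle will be the careful bookkeeping around the extension of $L$ to $\mathcal{L}^{\uparrow}$, specifically the exchange of two monotone limits needed for well-definedness: if $f_{n}\uparrow f$ and $g_{k}\uparrow g$ with $f\le g$, one must show $\lim_{n}L(f_{n})\le\lim_{k}L(g_{k})$, which reduces via $f_{n}=\lim_{k}(f_{n}\wedge g_{k})$ to applying the pre-integral continuity axiom to the sequence $(f_{n}-f_{n}\wedge g_{k})\downarrow 0$. Without the Stone axiom the above indicator approximations collapse, and one would only recover a measure on a coarser $\sigma$-algebra than $\sigma(\mathcal{L})$---insufficient for the downstream use in Theorem \ref{thm 1.1}, where the target $\sigma$-algebra is $\mathcal{F}\otimes\mathcal{B}(X)$.
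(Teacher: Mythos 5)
The paper offers no proof of this lemma at all: it is quoted directly from \cite[Theorem 4.5.2]{bud02}, so there is no internal argument to compare yours against. Your outline is the standard Daniell--Stone construction (extension of $L$ to the upper class $\mathcal{L}^{\uparrow}$, Carath\'eodory outer measure, Stone property to reach $\sigma(\mathcal{L})$, layer-cake identification), and most of it is sound: the well-definedness of the extension via $(f_n-g_k)^{+}\downarrow 0$, the definition of $\mu^{*}$, and the strategy for $L(f)=\int f\,d\mu$ are exactly how this theorem is usually proved.

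One step, however, is mis-justified as written: you claim $\{f>c\}\in\mathcal{M}$ follows from $\chi_{\{f>c\}}\in\mathcal{L}^{\uparrow}$ plus ``a symmetric argument on the complement.'' There is no symmetry to invoke: $\chi_{\{f\le c\}}$ is a decreasing limit, it lies in $\mathcal{L}^{\uparrow}$ only under extra hypotheses (e.g.\ the constant $1\in\mathcal{L}$, which is not assumed), and even having both indicators in $\mathcal{L}^{\uparrow}$ would not by itself yield Carath\'eodory measurability. What you actually need is the splitting inequality $\mu^{*}(S)\ge\mu^{*}(S\cap A)+\mu^{*}(S\setminus A)$ for $A=\{f>c\}$ and every $S\subseteq X$, and the standard route is: for $u\in\mathcal{L}^{\uparrow}$ with $u\ge\chi_{S}$ (hence $u\ge 0$), write $u=u\wedge g_{n}+(u-g_{n})^{+}$ with $g_{n}=(n(f-c)^{+})\wedge 1$; both summands are in $\mathcal{L}^{\uparrow}$ and $L(u\wedge g_{n})+L((u-g_{n})^{+})=L(u)$, while $u\wedge g_{n}\uparrow u\wedge\chi_{A}\ge\chi_{S\cap A}$ and $(u-g_{n})^{+}\ge\chi_{S\setminus A}$ for every $n$; letting $n\to\infty$ gives the inequality. (Note $(u-\chi_{A})^{+}$ itself need not lie in $\mathcal{L}^{\uparrow}$, which is why the approximants $g_{n}$ must be retained.) A smaller caution: in the layer-cake step do not sandwich $f$ using $\frac{1}{m}\chi_{\{f>0\}}$, since $L(\chi_{\{f>0\}})$ may be infinite; instead take increasing simple sums of the indicators $\chi_{\{f>k2^{-m}\}}$ and use the monotone-convergence property of the extended $L$. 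With these repairs your plan does prove the lemma, essentially as in the cited source.
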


Using pre-integrals, one can construct a measure on the locally compact metric space $\Omega \times X$. To obtain an invariant measure, a standard technique involves iterating the measure under $\Theta$. However, the usual weak$^{*}$ topology convergence for Borel probability measures on compact sets fails in this setting. Instead, we rely on a type of convergence for locally compact metric spaces from outer measure theory to overcome this obstruction.

We proceed to present some useful concepts and results from outer measure theory \cite{fol99,m95,sim18}. 
Given a set $X$, a set function $\mu: 2^X\rightarrow[0,\infty]$ is  called an \emph{outer measure}   on  $X$  if  it satisfies $\mu(\emptyset)=0$,   $\mu(A)\leq \mu(B)$ for all  $A\subset B$ and $\mu(\bigcup_{j=1}^{\infty}A_j)\leq \sum_{j=1}^{\infty}\mu(A_j)$  for any sequence $\{A_j\}_{j\geq 1} \subset 2^X$.

Examples include the standard Lebesgue measure on $\mathbb{R}^n$, and the counting measure on  $X$ given by $\mu(A)=\#A$ for any  finite  subset $A$ of $X$ and $\mu(A)=\infty$ otherwise.

If  a subset $A  \subset X$ satisfies Carath\'{e}odory's  condition: for any $S  \subset X$, $$\mu(A)=\mu(A\cap S)+\mu(A\backslash S),$$ 
then  $A$ is called a \emph{$\mu$-measurable set}.  In particular, an outer measure $\mu$ on   a metric space $X$ is called  \emph{Borel-regular} if   all  Borel sets of $X$ are  $\mu$-measurable sets,  and  for any $A\subset X$, there exists  a Borel  set  $B\supset A$  such  that $\mu(B)=\mu(A)$.

For a   locally  compact metric space $X$, an  outer measure  $\mu$  is called  a \emph{Radon measure} if it satisfies:
\begin{enumerate}
\item [(1)] $\mu$ is Borel-regular;
\item  [(2)]$\mu(K)<\infty$  for each compact set $K\subset X$;
\item [(3)]for any  $A\subset X$,  $\mu(A)=\inf_{U\text{open~ set}, A\subseteq U}\limits \mu(U)$;
\item [(4)]  for any  open set $U\subset X$, $\mu(U)=\sup_{K\text{compact~set}, K\subset U}\limits \mu(K)$.
\end{enumerate}

A sequence  of Radon measures $\{\mu_n\}_{n= 1}^\infty$ on $X$  \emph{converges  weakly  to $\mu$}  if $\lim\limits_{n\to \infty}\int f d\mu_n=\int fd\mu$ for all $f \in C_c(X)$.   The following lemma is a direct consequence of Urysohn's lemma  for locally compact  metric spaces.

\begin{lem}\cite[Theorem 1.24]{m95}\label{lem 3.1}
Let  $X$ be a locally compact metric  space. Suppose that 
$\{\mu_n\}_{n=1}^{\infty}$  and  $\mu$ are  Radon measures on    $X$ such that  $\mu_n $ weakly converges  to $\mu$. Then 
\begin{enumerate}
\item[(i)] for any  open set $O \subset X$, $\liminf_{n\to \infty}\limits \mu_n(O)\geq \mu(O)$;
\item [(ii)] for any  compact subset $K\subset X$, $\limsup_{n\to \infty}\limits \mu_n(K)\leq \mu(K)$.
\end{enumerate}

\end{lem}

We give  a criterion for determining whether a sequence of Radon measures admits a weakly convergent subsequence.

\begin{lem}\cite[Theorem 5.15]{sim18}\label{lem 3.2}
Let  $X$ be a locally compact and $\sigma$-compact metric   space. Let $\{\mu_n\}_{n=1}^{\infty}$  be a sequence of Radon measures on  $X$ such that  $\sup_{n\geq 1}\mu_n(K)<\infty$ for each compact set $K \subset X$.  Then there exists a subsequence $\{\mu_{n_k}\}_{k\geq 1}$ that converges weakly to a Radon measure $\mu$ on $X$.
\end{lem}

Carath\'{e}odory's  theorem \cite[Proposition  1.11]{fol99} states that the collection of all  $\mu$-measurable sets in $X$, denoted by $\sigma(\mu)$, forms a $\sigma$-algebra. Restricting a Radon measure (with $\mu(X)=1$) to  its Borel $\sigma$-algebra  yields a Borel probability measure. Conversely,   given a Borel probability measure, one can construct a Radon measure as follows:

\begin{lem}\label{lem 3.3}
Let $X$ be a locally compact, separable metric space, and  let $\mu$ be a Borel probability measure on  $X$.  For any $A\subset X$,  we define
$$\mu^{*}(A)=\inf_{B\in \mathcal{B}(X),~ A\subset B}\mu(B).$$
Then $\mu^{*}$ is a Radon measure on $X$. If $f$ is a  $\mu$-integrable  function on $X$, then  
\begin{align}\label{equ 3.1}
\int fd\mu=\int fd\mu^{*}.
\end{align}
\end{lem}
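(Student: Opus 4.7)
The plan is to verify the three defining properties of a Radon measure for $\mu^{*}$ and then deduce part (2) from the fact that $\mu^{*}$ extends $\mu$. First I would check that $\mu^{*}$ is an outer measure on $X$: $\mu^{*}(\emptyset)=0$ and monotonicity are immediate from the definition, and countable subadditivity follows by a standard $\epsilon/2^{j}$ argument --- for each $A_{j}$ pick Borel $B_{j}\supset A_{j}$ with $\mu(B_{j})\le\mu^{*}(A_{j})+\epsilon/2^{j}$ and note that $\bigcup_{j}B_{j}\in\mathcal{B}(X)$ contains $\bigcup_{j}A_{j}$. Next I would show that $\mathcal{B}(X)\subset\sigma(\mu^{*})$ and $\mu^{*}|_{\mathcal{B}(X)}=\mu$. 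The identification on Borel sets is clear from the definition together with monotonicity of $\mu$; for Carath\'eodory's criterion, given $E\in\mathcal{B}(X)$ and $S\subset X$, I would pick Borel $B\supset S$ with $\mu(B)$ close to $\mu^{*}(S)$ and use additivity of $\mu$ on the disjoint Borel pieces $B\cap E$ and $B\setminus E$ to obtain the required splitting inequality.

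Borel-regularity of $\mu^{*}$ then follows: given $A\subset X$ (where $\mu^{*}(A)\le 1<\infty$ since $\mu$ is a probability), choose Borel $B_{n}\supset A$ with $\mu(B_{n})<\mu^{*}(A)+1/n$ and set $B=\bigcap_{n}B_{n}\in\mathcal{B}(X)$; then $\mu^{*}(B)=\mu(B)\le\mu^{*}(A)$, forcing equality. Finiteness on compacts is trivial, since any compact $K$ is closed, hence Borel, and $\mu^{*}(K)=\mu(K)\le 1$.

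The main obstacle is the outer regularity of $\mu^{*}$ by open sets together with the inner regularity on open sets, namely $\mu^{*}(A)=\inf\{\mu^{*}(U):U\supset A\text{ open}\}$ and $\mu^{*}(U)=\sup\{\mu^{*}(K):K\subset U\text{ compact}\}$. I would reduce these to the regularity of $\mu$ itself. Since $X$ is locally compact and separable, it is $\sigma$-compact and Polish, so the regularity theorem already invoked in the paper (\cite[Theorem B.13, p.\,425]{ew13}) yields, for every Borel $B$, the equalities $\mu(B)=\inf_{U\supset B,\,U\text{ open}}\mu(U)=\sup_{K\subset B,\,K\text{ compact}}\mu(K)$. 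Outer regularity of $\mu^{*}$ then follows by composing two approximations --- first $A$ by a Borel $B\supset A$, then $B$ by an open $U\supset B$ --- and inner regularity on an open $U$ is just the inner regularity of $\mu$ applied to the Borel set $U$.

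Finally, part (2) is a short consequence of part (1). Since $\mu^{*}|_{\mathcal{B}(X)}=\mu$, integrals of Borel-measurable simple functions agree. Monotone convergence extends the identity to nonnegative Borel-measurable functions, and linearity extends it to every $\mu$-integrable $f$, yielding $\int f\,d\mu=\int f\,d\mu^{*}$.
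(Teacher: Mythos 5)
Your proof is correct, and the common core — showing $\mu^{*}$ is an outer measure, identifying $\mu^{*}|_{\mathcal{B}(X)}=\mu$, verifying Carath\'eodory's criterion for Borel sets by approximating $S$ from outside by a Borel set $B$ and splitting $B$ along $E$, and obtaining Borel-regularity by intersecting a sequence of Borel envelopes $B_{n}\supset A$ — matches the paper's argument almost line for line. Where you diverge is in how Radon-ness is then concluded: the paper invokes \cite[Lemma 5.9]{sim18}, a structural result saying that on a space in which every open set is $\sigma$-compact, a Borel-regular outer measure that is finite on compacts is automatically Radon (so the paper only needs to check that open sets of $X$ are countable unions of compacts). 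You instead verify the outer-regularity and inner-regularity clauses of the Radon definition directly, lifting them from the regularity of the underlying probability measure $\mu$ on the $\sigma$-compact Polish space $X$ via \cite[Theorem B.13]{ew13}, composing the Borel envelope with an open (resp.\ compact) approximation. Your route is a bit more hands-on and transparent about why the regularity properties of $\mu^{*}$ hold, at the cost of an extra approximation step; the paper's route is shorter once Simon's lemma is granted. Both are sound, and part (2) is handled identically by the standard simple-function/monotone-convergence bootstrap from $\mu^{*}|_{\mathcal{B}(X)}=\mu$.
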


\begin{proof}
It is easy to see that  $\mu^{*}$ is an outer measure on $X$.  To show  that   $\mu^{*}$ is a Radon measure on $X$, by \cite[Lemma 5.9]{sim18} it suffices to show  that \\
(a) each open  set  of $X$ is a  countable union of compact sets of $X$;\\
(b) $\mu^{*}$ takes finite value  for  compact sets of $X$;\\
(c) $\mu^{*}$ is  a Borel-regular outer measure on $X$.

(b) is clear since $\mu(X)=1$. Since  each open set of $X$ is a  $F_\sigma$-set and  any closed set of $\sigma$-compact  metric space  $X$ is still  $\sigma$-compact,  then each open  set  can be expressed as  a  countable union of some  compact sets of $X$.  This shows (a). For any $A\subset X$ and $n\geq 1$,  one can choose  a Borel set  $B_n \supset A$ such that
$\mu(B_n)<\mu^{*}(A)+\frac{1}{n}$.  We set $B=\bigcap_{n\geq 1}B_n \in \mathcal{B}(X)$. Then $$\mu^{*}(A)\leq \mu^{*}(B)\leq \mu(B)  < \mu^{*}(A)+\frac{1}{n},$$
which  implies that $\mu^{*}(A)=\mu^{*}(B)$. Now assume that $A\in \mathcal{B}(X)$ and $S\subset X$. By the countable sub-additivity of outer measure,  we have $\mu^{*}(S)\leq \mu^{*}(S\cap A)+\mu^{*} (S\backslash A)$. On the other hand, there exists  a  Borel set $ S_1 \supset S$ such that $\mu^{*}(S)=\mu^{*}(S_1)$. Noting that $\mu^{*}|_{\mathcal{B}(X)}=\mu$, this yields that 
\begin{align*}
\mu^{*}(S)=\mu(S_1)&= \mu(S_1\cap A)+\mu (S_1\backslash A)\\
&=\mu^{*}(S_1\cap A)+\mu^{*} (S_1\backslash A)\\
&\geq  \mu^{*}(S\cap A)+\mu^{*} (S\backslash A).
\end{align*}
So $\mu^{*}$ is Borel-regular. This shows (c). To sum up, $\mu^{*}$ is a Radon measure on $X$.

Recall that the integral   $\int fd\mu^{*}$ of $f$  with respect to $\mu^{*}$   is defined on  $(X, \sigma(\mu^*),\mu^{*}|_{\sigma(\mu^{*})})$ as in  classical measure theory. Each  $\mu$-measurable  function  $f$ on $X$  is also   $\mu^*$-measurable since  $ \sigma(\mu^*) \supset  \mathcal{B}(X)$.  Then the  equality (\ref{equ 3.1}) follows from  the  fact $\mu^{*}|_{\mathcal{B}(X)}=\mu$ and  a standard approximation  technique for integrable functions that is
 used  in real analysis.
\end{proof}

\subsubsection{Proof of Theorem \ref{thm 1.1}}
\begin{proof} 

The translation invariance    of    $\Gamma$ shows that  $\Gamma(-\Gamma(0))=\Gamma(0)-\Gamma(0)=0$.  
By Proposition \ref{prop 2.4}, for all $\mu\in\PP$ we have $s(\mu)\leq \int \Gamma(0)d\mu=\Gamma(0)$. This implies that 
$$\sup
_{\mu \in \mathcal{M}_{\mathbb{P}}(\Omega \times X, G)}s(\mu)\leq \Gamma(0).$$

To get the converse inequality
\begin{align}\label{equ 3.4}
\Gamma(0)\leq\sup_{\mu \in \mathcal{M}_{\mathbb{P}}(\Omega \times X, G)}s(\mu),
\end{align}
we need  to show  for any $\epsilon >0$, there exists $\mu\in \mathcal{M}_{\mathbb{P}}(\Omega \times X, G)$ such that  $\Gamma(0) -\epsilon<s(\mu)$.  By Lemma \ref{lem 2.6}, this is equivalent to show 
\begin{align}\label{equ 3.5}
\inf_{f\in \widetilde{\mathcal{A}}_c}\int fd\mu+\int c d\mu+\epsilon >0,
\end{align}
where 
$ \widetilde{\mathcal{A}}_c=\{f\in C_c(\Omega\times X):\Gamma(-f)\leq0\}$
and $c:=-\Gamma(0)$.  Equip  the bounded function space 
$$L_{b}^1(\Omega, C(X))=\{f\in L_{\mathbb{P}}^1(\Omega, C(X)):f~\text{is bounded on } \Omega \times X\}$$  
with subspace topology inherited  from $L^{1}(\Omega, C(X))$, and  define 
$$ \widetilde{\mathcal{A}}_b=\{f\in L_{b}^1(\Omega, C(X)):\Gamma(-f)\leq0\}.$$
Clearly,  $ \widetilde{\mathcal{A}}_c\subset \widetilde{\mathcal{A}}_b$.
The convexity  and Lipschitz properties of $\Gamma$     yield that $\widetilde{\mathcal{A}}_b$ is a closed convex subset of $L_b^1(\Omega, C(X))$.   Notice that  $\Gamma(c)=0$. Then $-(c+\frac{\epsilon}{2})\notin \widetilde{\mathcal{A}}_b$ and hence $-c\notin \widetilde{\mathcal{A}}_b+\frac{\epsilon}{2}$.  Consider  the  disjoint  closed  convex subsets $K_1=\{-c\}$ and $K_2=\widetilde{\mathcal{A}}_b+\frac{\epsilon}{2}$.  By Lemma \ref{lem 3.4},  there exists a  continuous real-valued linear  functional  $L$ on $L_b^1(\Omega, C(X))$ such that   for any  $f\in \widetilde{\mathcal{A}}_b+\frac{\epsilon}{2}$,
\begin{align}\label{equ 3.7}
L(f)> L(-c).
\end{align}

We   first  obtain a Borel  probability  measure $\mu_0$ on $(\Omega\times X,\mathcal{B}(\Omega \times X) )$ such that for  every $f \in  L_b^1(\Omega, C(X))$, 
\begin{align}\label{equu 3.5}
\frac{L(f)}{L(1)}=\int fd\mu_0.
\end{align}  
Let $f\geq 0$ and $\lambda >0$. By the translation invariance and the  monotonicity of $\Gamma$, one has
\begin{align*}
\Gamma(-(\lambda f+1+\Gamma(0)))&=\Gamma(-\lambda f)-1-\Gamma(0)\\
&\leq \Gamma(0)-1-\Gamma(0)=-1<0.
\end{align*}
Then $L(-c)<\lambda L(f)+L(1+\Gamma(0)+\frac{\epsilon}{2})$ by (\ref{equ 3.7})  and the linearity of $L$.    If $L(f)<0$,  by letting $\lambda\to +\infty$  we get $L(-c)=-\infty$. This implies that $L(f)$ must be non-negative.   So  $L$ is  a  positive linear functional.  Let $\{g_n(\omega,x)\}_n$ be a  sequence  of bounded functions in $L_b^1(\Omega, C(X))$  such that   $g_n(\omega,x)$ is pointwise  decreasing  to  $0$. For every fixed $\omega$, the  function sequence  $\{g_n(\omega)\}_n$ on $X$, given by  $g_n(\omega):=g_n(\omega,\cdot)$,   is pointwise decreasing to  $0$ for all $x\in X$.  By  Dini's theorem,  $g_n(\omega) $ converges  uniformly to $0$ for every $\omega$, i.e., $||g_n(\omega)||_{\infty}\downarrow 0$.  Then  Levi's  monotone  convergence theorem  gives us  $$\lim_{n\to\infty}||g_n-0||=\lim_{n\to\infty}\int ||g_n(\omega)||_{\infty}d\P(\omega)=0.$$ Since $L$ is positive, continuous and  $L(0)=0$, this yields that $L(g_n)\downarrow 0$.  Since  $L$  is positive and  take non-zero value for some $f$,   we can choose $ g\in L_b^1(\Omega, C(X))$ such that $0\leq g\leq 1$ and  $L(g)>0$.  Then  $$L(1)=L(g)+L(1-g)>0.$$

To sum up, $\frac{L(\cdot)}{L(1)}$ is  a  pre-integral on   Stone vector lattice $L_b^1(\Omega, C(X))$.  By Lemma \ref{lem 3.5},  there exists a  probability measure $\mu_0$ on $(\Omega\times X,\sigma(L_{b}^1(\Omega, C(X)) )$ such   that for  every $f \in  L_b^1(\Omega, C(X))$, 
\begin{align}\label{equ 3.8}
\frac{L(f)}{L(1)}=\int fd\mu_0.
\end{align}

Since  each element in $L_{b}^1(\Omega, C(X))$ is measurable in $(\omega,x )$,  the $\sigma $-algebra   $\sigma(L_{b}^1(\Omega, C(X))$  generated by the functions of
$L_{b}^1(\Omega, C(X))$ is contained in  the product $\sigma$-algebra $\mathcal{B}(\Omega)\otimes \mathcal{B}(X)$. On the other hand, let  $A\in \mathcal{B}(\Omega)$ and $B$ be   a closed subset of $X$. We set $g_n(\omega,x):=\chi_A(\omega)\cdot b_n(x) \in L_{b}^1(\Omega, C(X))$, where $b_n(x)=1-\min\{nd(x,B),1\}$. Then  $\lim_{n\to \infty}g_n(\omega,x)=\chi_A(\omega)\chi_{B}(x)$  for all $(\omega,x)$. Therefore,  $A\times B \in \sigma(L_{b}(\Omega, C(X))) $ by the measurability of $g_n$.  This shows that  $\sigma(L_{b}(\Omega, C(X))= \mathcal{B}(\Omega)\otimes \mathcal{B}(X)=\mathcal{B}(\Omega \times X)$. Hence, $\mu_0$ is a Borel  probability measure on $\Omega \times X$.

Next, we   construct an invariant  probability measure on $\Omega \times X$  with  marginal  $\P$. Let  $f\in L_b^1(\Omega, C(X))$ and $g\in G$. Then  
\begin{align*}
||f\circ \Theta_g||=\int \sup_{x\in X}|f(g\omega, T_{g,\omega}x)| d\P(\omega)\leq \int ||f(g\omega)||_{\infty}d\P(\omega)=||f||,
\end{align*}
where we used the fact that $\mathbb{P}$ is $G$-invariant for the last equality. So $f\circ \Theta_g$ belongs to $ L_b^1(\Omega, C(X))$. By (\ref{equ 3.8})  
and the linearity of $L$,   for all $f\in L_b^1(\Omega, C(X))$ one has 
\begin{align}\label{equ 3.9}
\frac{L(\frac{1}{|F_n|}\sum_{g\in F_n}f\circ \Theta_g)}{L(1)}=\int fd\frac{1}{|F_n|}\sum_{g\in F_n}(\Theta_g)_{*}\mu_0,
\end{align}
where $\{F_n\}_{n\ge1}$ is a  F\o lner sequence of $G$. We set $$\mu_n= \frac{1}{|F_n|}\sum_{g\in F_n}(\Theta_g)_{*}\mu_0.$$ Then $\{\mu_n\}$ is a sequence of Borel probability measures  on $\Omega \times X$.  For any $A\subset \Omega \times X$,  we define
$$\mu_n^{*}(A)=\inf_{B\in \mathcal{B}(\Omega \times X), B\supset A}\mu_n(B).$$
By  Lemma \ref{lem 3.3}, $\{\mu_n^{*}\}_{n\geq 1}$  is a sequence of  Radon measures  on $\Omega \times X$.  Since   $\mu_n^{*}(K)=\mu_n(K)\leq 1$ for  each   compact set    $K$ of $\Omega \times X$,  then, by Lemma \ref{lem 3.2},   without loss of generality we  may  assume that  $\mu_{n}^{*}$  converges to a  Radon measure $\mu^*$ on $\Omega \times X$. Notice that  $\mu^*$ is Borel-regular. The measure $$\mu:=\mu^*|_{\mathcal{B}(\Omega \times X)},$$ which is a   restriction of $\mu^*$ on $\mathcal{B}(\Omega \times X)$, is a Borel measure on $\Omega \times X$.

We need to show  $\mu \in \M$.  Let $L^1(\Omega, \mathcal{B}(\Omega), \P)$ denote the  usual $L^1$-space with $L^1$-norm  $||\cdot||_{L^1}$. Given $f\in L^1(\Omega, \mathcal{B}(\Omega), \P)$,    by setting $\widetilde{f}|_{\{\omega\}\times X}=f(\omega)$ for  every $\omega$, this  yields  a function  $\widetilde{f} \in L_{\mathbb{P}}^1(\Omega, C(X))$. Clearly,  $\widetilde{\chi_A}=\chi_{A\times X}$ for any $A\in \mathcal{B}(\Omega)$.   
By $L^1$-mean ergodic theorem \cite[Theorem  4.23]{kl16}, one has 
\begin{align*}
||\widetilde{\frac{1}{|F_n|}\sum_{g\in F_n}{\chi_A}\circ g}-\P(A)||
=&\int |\frac{1}{|F_n|}\sum_{g\in F_n}{\chi_A}\circ g(\omega)-\P(A)|d \P(\omega)\\
=&||{\frac{1}{|F_n|}\sum_{g\in F_n}{\chi_A}\circ g}-\P(A)||_{L^{1}} \to 0, n\to \infty.
\end{align*} 
The continuity of $L$ implies that
\begin{align}\label{equ 3.10}
\lim_{n\to \infty}\frac{L(\frac{1}{|F_n|}\sum_{g\in F_n}\widetilde{\chi_A}\circ \Theta_g)}{L(1)}=\lim_{n\to \infty}\frac{\widetilde{L(\frac{1}{|F_n|}\sum_{g\in F_n}{\chi_A}\circ g)}}{L(1)}=\P(A).
\end{align}
Let $K$ be a compact subset of $\Omega$.  Since $\mu_n^*|_{\mathcal{B}(\Omega \times X)}=\mu_n$ for each $n$,  one has 
\begin{align}\label{equ 3.11}
\P(K)&  \overset{by~\text{(\ref{equ 3.10})}  }{=}\lim\limits_{n\to \infty}\frac{L(\frac{1}{|F_n|}\sum_{g\in F_n}\widetilde{\chi_K}\circ \Theta_g)}{L(1)} \overset{by~\text{(\ref{equ 3.9})}  }{=}\lim_{n \to \infty}\int \chi_{K\times X}d\mu_n^*\\
&= \limsup_{n \to \infty}\mu_n^*(K\times X)\overset{by~\text{ Lemma \ref{lem 3.1}}}{\leq} \mu^*(K\times X)=\mu(K\times X).\nonumber
\end{align}
By  \cite[Theorem B.13, p.425]{ew13},  $\mathbb{P}$ is regular, that is,  for any $A\in \mathcal{B}(\Omega)$
\begin{align}
\P(A)=\sup_{K\subset A,  K\text{ is compact}} \P(K)\leq \mu(A\times X),~ \text{by (\ref{equ 3.11})}.
\end{align}
The  similar arguments give  us $\P(A)\geq \mu(A\times X)$. Thus, $\P(A)=\mu(A\times X)$ for all $A\in \mathcal{B}(\Omega)$. Specially, we have $\mu(\Omega \times X)=1$. This shows $(\pi_{\Omega})_{*}{\mu}=\P$ and hence $\mu \in \PP$. 

The remaining is to show that $\mu$ is $\Theta_g$-invariant for all $g\in G$.
For  every $f\in C_c(\Omega \times X)$, one has 
\begin{align}\label{equ 3.12}
\lim_{n\to \infty}\frac{L(\frac{1}{|F_n|}\sum_{g\in F_n}f\circ \Theta_g)}{L(1)}&\overset{by~\text{(\ref{equ 3.9})}  }{=}\lim_{n\to \infty}\int fd\mu_n\\
&\overset{by~\text{ Lemma \ref{lem 3.3},(ii)}}{=}\lim_{n\to \infty}\int fd\mu_n^*\nonumber\\
&=\int fd{\mu^*}=\int fd\mu,  \nonumber
\end{align}
where the limits  exist since $\mu_n^{*} \to \mu$ as $n \to \infty$.
Let $f\in   L^1_b(\Omega, C(X))$   and $\gamma >0$. Choose $f_*\in C_c(\Omega \times X)$ such  that $||f-f_*||<\frac{L(1)\gamma}{3||L||}$ by Proposition \ref{prop 2.10},(3), where $||L||$ is the operator norm of $L$.  By (\ref{equ 3.12}), for sufficiently large $n$, we have
$$|\frac{L(\frac{1}{|F_n|}\sum_{g\in F_n}f_*\circ \Theta_g)}{L(1)}-\int f_*d\mu|<\frac{\gamma}{3}.$$
So
\begin{align*}
|\frac{L(\frac{1}{|F_n|}\sum_{g\in F_n}f\circ \Theta_g)}{L(1)}-\int fd\mu|
&\leq|\frac{L(\frac{1}{|F_n|}\sum_{g\in F_n}f\circ \Theta_g)}{L(1)}-\frac{L(\frac{1}{|F_n|}\sum_{g\in F_n}f_*\circ \Theta_g)}{L(1)}|+\\
&|\frac{L(\frac{1}{|F_n|}\sum_{g\in F_n}f_*\circ \Theta_g)}{L(1)}-\int f_*d\mu|+|\int f_*d\mu-\int fd\mu|\\
&<\gamma,
\end{align*}
where the first term is bounded by  $\sum_{g\in F_n}\frac{|L((f-g)\circ \Theta_g)|}{ L(1) |F_n|}\leq \sum_{g\in F_n}\frac{||L||\cdot ||f-f_*||}{ L(1) |F_n|}<\frac{\gamma}{3}$.
This shows that
\begin{align}\label{equ 2.13}
\lim_{n\to \infty}\frac{L(\frac{1}{|F_n|}\sum_{g\in F_n}f\circ \Theta_g)}{L(1)}=\int fd\mu
\end{align} 
holds for all $f\in   L^1_b(\Omega, C(X))$.  Let $h\in G$ and $f\in   L^1_b(\Omega, C(X))$. Replacing  $f$ by $f\circ \Theta_h$ in (\ref{equ 2.13}),  we have 
\begin{align*}
|\int f\circ \Theta_h d\mu-\int fd\mu|&=\lim\limits_{n \to \infty}\frac{1}{L(1)|F_n|}| \sum_{g\in hF_n\Delta F_n}  L(f\circ \Theta_g)|\\
&\leq \lim_{n \to \infty} \frac{ ||L||\cdot ||f||}{L(1)}\frac{|hF_n\Delta F_n|}{|F_n|}=0.
\end{align*}
Note that $L_b^1(\Omega, C(X))$ is dense in $L_{\mathbb{P}}^1(\Omega, C(X))$. A standard approximation approach shows that   $\int f\circ \Theta_h d\mu=\int fd\mu$  for all $f\in  L_{\mathbb{P}}^1(\Omega, C(X))$ and $h\in G$. This shows that $\mu \in \mathcal{M}_{\mathbb{P}}(\Omega \times X,G)$. 

Finally, we  show  that $\mu$  exactly satisfies  the inequality (\ref{equ 3.5}):   $$\inf_{f\in \widetilde{\mathcal{A}}_c}\int fd\mu+\int c d\mu+\epsilon>0.$$  
For   each $f\in \widetilde{\mathcal{A}}_c$ and $g\in G$, one has  $f\circ \Theta_g \in \widetilde{\mathcal{A}}_b$ by the  semi-cohomology property of $\Gamma$. It follows  from (\ref{equ 3.12}) that
\begin{align}\label{equ 3.13}
\int fd\mu= \lim_{n\to \infty}\frac{1}{|F_n|}\sum_{g\in F_n}\frac{L(f\circ \Theta_g)}{L(1)}\geq \frac{1}{L(1)}\inf_{f\in \widetilde{\mathcal{A}}_b}L(f),
\end{align}
which implies that  $s(\mu)\geq\frac{1}{L(1)}\inf_{g\in \widetilde{\mathcal{A}}_b}L(g)$ by Proposition \ref{lem 2.6}.  Therefore,   one has 
\begin{align*}
s(\mu)+\int cd\mu+\epsilon &\geq\frac{1}{L(1)} \inf_{f\in \widetilde{\mathcal{A}}_b+\frac{\epsilon}{2}}L(f)+ \frac{L(c)}{L(1)}+\frac{\epsilon}{2}\\
&= \frac{1}{L(1)}(\inf_{f\in \widetilde{\mathcal{A}}_b+\frac{\epsilon}{2}}{L(f)}+L(c)) +\frac{\epsilon}{2}>0, \text{by (\ref{equ 3.7})}.
\end{align*}

It is easy to see that $\M$ is  a  closed subset of $\PP$ in the weak$^{*}$-topology.   Since  $s(\mu)$ is  an  upper  semi-continuous and concave  function on $\M$, then the set 
$$M_{max}(T,\Gamma):=\{\mu\in \M: \Gamma(0)=s(\mu)\}$$
is a non-empty  compact convex subset of $\M$. Thus, the supremum can be  attained for some invariant measures.
\end{proof}

As we have  mentioned, if $\Omega=\{\omega\}$ is  a   single point and $\mathbb{P}$ is the  Dirac measure  $\delta_{\omega}$ at $\omega$, then the RDS  is reduced to the $G$-system  $(X,G)$,   and $L_{\mathbb{P}}^1(\Omega, C(X))$ coincides with $C(X)$ equipped with the supremum norm. In this case,  condition (4) in Definition \ref{Def3.1} can be removed  since it can be deduced from conditions  (1) and  (2).  More precisely, for any $f_1, f_2 \in C(X)$, we have
$$\Gamma(f_1)\leq \Gamma(f_2+||f_1-f_2||_{\infty})=\Gamma(f_2)+||f_1-f_2||_{\infty}.$$
Exchanging the role of $f_1$ and $f_2$, we get the converse inequality.

 Let  $G=\mathbb{Z}_{+}^k$, or $G$ be an amenable group continuously acting on a compact metric space $X$  via continuous self-maps $\{T_g: g\in G\}$. If the pressure function  $\Gamma: C(X)\rightarrow \mathbb{R}$ satisfies the following properties:

\begin{enumerate}
\item Monotonicity:  $f\leq g \Longrightarrow$ $\Gamma(f)\leq \Gamma(g)$ $ \forall f,g \in  C(X)$.
\item  Translation invariance: $\Gamma(f+c)= \Gamma(f)+c $ $ \forall f \in  C(X)$ and $ c\in \mathbb{R}$.
\item Convexity:    $\Gamma(pf+(1-p)g)\leq p\Gamma(f)+(1-p)\Gamma(g)$  $\forall f, g\in C(X)$ and $p\in [0,1]$.
\item Semi-cohomology:  $\Gamma(f\circ T_g)\leq \Gamma (f)$ $ \forall f \in  C(X)$ and $g \in G$,
\end{enumerate}
then, by  Theorem \ref{thm 1.1}, we   obtain the following variational principle  for pressure functions in the context of $G$-actions.
\begin{thm}\label{thm 3.12}
Let $G=\mathbb{Z}_{+}^{k}$, $k\geq 1$, or $G$ be an  amenable group continuously acting on a compact metric space $(X,d)$.  If $\Gamma$ is a  pressure  function on $C(X)$, then 
$$ \Gamma(0)=\max_{\mu \in \mathcal{M}(X,G)}s(\mu),$$
where  
 \begin{align*}
s(\mu)&=\inf\{\Gamma(f)-\int fd\mu:f\in C(X)\}\\
&= \inf_{f\in \mathcal{A}}\int fd\mu,
\end{align*}
and $\mathcal{A}=\{f\in C(X):\Gamma(-f)=0\}$. Furthermore, $s(\mu)$ is a concave upper semi-continuous function on the set of $G$-invariant  Borel probability measures $\mathcal{M}(X,G)$.
\end{thm}

\section{Some applications of the main result}\label{sec 4}

In this section, we present several applications of Theorem \ref{thm 1.1}. The main results of this section are Theorems A-D, which corresponds to the answer to  \emph{Questions 1-3}.

\subsection{Application to polynomial topological entropy of  zero entropy systems}

Although the zero entropy systems are commonly regarded as a class of ``simple'' systems with lower topological complexity, it can exhibit intricate dynamical behaviors that can be detected by  the polynomial topological entropy.

We first review the definition of polynomial  entropy, which is defined by considering the  polynomial growth rate of  distinguishable orbits \cite{kt97}. Let a pair $(X,T)$ denote a  topological dynamical system (TDS for short), where  $T: X\rightarrow X$ is a  continuous self-map  on a compact  metric space $(X,d)$.  

Given a non-empty subset $Z \subset X$, $F\subset Z$ is called a $(d,\epsilon)$-separated set of $Z$ if any distinct $x,y \in F$, one has $d(x,y)>\epsilon$. Denote the largest  cardinality  of $(d,\epsilon)$-separated set of $Z$ by $s(Z,d,\epsilon)$.  The  Bowen metric  $d_n$ on $X$  is defined as 
$$d_n(x,y)=\max_{0\leq j \leq n-1}d(T^{j}(x), T^{j}(y))$$
for all $x,y \in X$.

Let $f\in C(X)$ and  set $S_{n}f(x):={\sum_ {j=0}^{\lceil \log n\rceil}f(T^{j}x)}$, where $\lceil  u \rceil$ is  the smallest integer greater than $u$. 
 
\begin{df}
 We define the \emph{polynomial  topological pressure of $f$ on $X$} as
$$P_{pol}(T,f)=\lim\limits_{\epsilon \to 0}\limsup_{n \to \infty}\frac{1}{\log n}\log \sup_{E\subset X}\{\sum_{x\in E}e^{S_{n}f(x)}\},$$
where the supremum ranges over  the set of $(d_n,\epsilon)$-separated sets of $X$.
\end{df}

We set $h_{pol}(T,X):=P_{pol}(T,0)$,  and call  $h_{pol}(T,X)$ the \emph{polynomial topological  entropy of $X$} \cite{kt97}.

Using the idea in Section \ref{sec 3},  we introduce the following:

\begin{df}
Let  $(X,T)$ be a TDS. For every $\mu \in \mathcal{M}(X,T)$, we define the polynomial measure-theoretic  entropy of $\mu$ as
$$h_{\mu}^{pol}(T)=\inf_{f\in C(X)}\{P_{pol}(T,f)-\int fd\mu\}.$$
\end{df}

\begin{prop}
$P_{pol}(T,\cdot): C(X)\rightarrow \mathbb{R}$ is a pressure functions on $C(X)$.
\end{prop} 
\begin{proof}
The monotonicity and translation invariance properties of $P_{pol}(T,\cdot)$ are clear. Let  $p\in [0,1]$ and $f,g\in C(X)$. If $E$ is a $(d_{n},\epsilon)$-separated set of $X$,  then, applying H\"{o}lder's inequality,  we have
\begin{align*}
\sum_{x\in E}e^{pS_{n}f(x)+(1-p)S_{n}g(x)}\leq (\sum_{x\in E}e^{S_{n}f( x)})^p(\sum_{x\in E}e^{S_{n}g(x)})^{(1-p)},
\end{align*}
which yields that  $P_{pol}(T,pf+(1-p)g)\leq pP_{pol}(T,f)+(1-p)P_{pol}(T,g)$. 

Let $f\in C(X)$. If $E$ is a $(d_{n},\epsilon)$-separated set of $X$, then for any $x\in E$, $$S_{n}(f\circ T)(x):={\sum_ {j=1}^{\lceil \log n\rceil +1}f(T^{j}x)}=S_{n}f(x)+f(T^{\lceil \log n\rceil})-f(x).$$  Notice that $|f(T^{\lceil \log n\rceil})-f(x)|\leq 2||f||_{\infty}$. This implies that   $P_{pol}(T,f\circ T)=P_{pol}(T,f)$. 
\end{proof} 

Thus, by Theorem \ref{thm 3.12}, we  obtain a variational principle for polynomial topological entropy, which gives an answer to \emph{Question 1}.

\begin{thma}\label{theorema}
Let $(X,T)$ be a TDS. 
 If $h_{pol}(T,X)<\infty$, then
\begin{align*}
h_{pol}(T,X)=\max_{\mu \in \mathcal{M}(X,T)} h_{\mu}^{pol}(T).
\end{align*}
\end{thma}

\subsection{Application to mean dimensions of infinite entropy systems}

To start with, we introduce the mean dimensions with potential of $G$-actions.

Let $(X,G)$ be a $G$-system with a metric $d$ on $X$. Denote  the set of non-empty finite subsets of $G$ by $\mathcal{F}(G)$.
Let  $F\in \mathcal{F}(G)$.  The Bowen metric w.r.t. $F$ is  given by
$$d_F(x,y)=\max_{g\in F}d(gx,gy).$$

Let $\epsilon >0$  and $f\in C(X)$.  We put
$$P_F(X,f,d,\epsilon)=\sup\{\sum_{x\in E}e^{S_Ff(x)}: E~\mbox{is a}~ (d_F,\epsilon)\mbox{-separated set of}~X\}.$$
\begin{df}
The metric mean dimension of $X$ with potential $f$ is defined by 
$${\rm \overline{mdim}}_M(G,X,f,d)=\limsup_{\epsilon \to 0}\frac{1}{\logf}\limsup_{n \to \infty}\frac{1}{|F_n|}\log P_{F_n}(X,f\cdot \logf,d,\epsilon),$$
where $\{F_n\}$ is a F\o lner sequence of $G$.
\end{df}

It is well-known that the metric mean dimension of $X$ with potential $f$ is independent of the choice of F\o lner sequence  $\{F_n\}$ of $G$. Letting $f=0$ and ${\rm \overline{mdim}}_M(G,X,d):={\rm \overline{mdim}}_M(G,X,0,d)$, we call ${\rm \overline{mdim}}_M(G,X,d)$ the \emph{metric mean dimension  of $X$}.

Let $\mathcal{C}_X^o$ denote the set of  all finite open covers of $X$. Given an  open cover $\mathcal{U} \in \mathcal{C}_X^o$,  we put ${\rm ord}_x(\mathcal{U})=\sum_{U \in \mathcal{U}}\chi_{U}(x)-1$ for every $x\in X$ and ${\rm ord}(\mathcal{V}):=\max_{x\in X}{\rm ord}_x(\mathcal{V})$. The order of $\UU$ is defined as
$$\mathcal{D}(\mathcal{U})=\min_{\mathcal{V}\succ\mathcal{U}}\rm{ord}(\mathcal{V}),$$
where the infimum is taken over all finite open covers  of $X$ that refines $\mathcal{U}$. 

For  $F\in \mathcal{F}(G)$, we define $\mathcal{U}_{F}=\vee_{g\in F} g^{-1}\mathcal{U}$. Then it is easy to verify that the function $$F\in \mathcal{F}(G) \mapsto \mathcal{D}(\mathcal{U}_F)$$ satisfies the   conditions of the well-known Ornstein-Weiss theorem \cite{ow87}. Hence,  the limit
$${\rm mdim}(G,\mathcal{U}):=\lim_{n\to\infty}\frac{\mathcal{D}(\mathcal{U}_{F_n})}{|F_n|}$$
exists and  does not depend on the choice of  F\o lner sequences $\{F_n\}_{n\ge1}$  of $G$. 

The  \emph{mean dimension} of $X$ is defined by
$${\rm mdim}(X,G)=\sup_{\mathcal{U} \in \mathcal{C}_X^o}{\rm mdim}(G,\mathcal{U}).$$

%To apply our main result, we need to define a notion called mean dimension with potential,  however we do not follow Tsukamoto's approach \cite[p.4]{t20} since  it does not satisfy the  desired convexity when  the mean dimension  with potential is regarded as  a function on $C(X)$. 

Fix a  F\o lner sequence $\{F_n\}_{n\ge1}$  of $G$ and a continuous potential $f:X\rightarrow \mathbb{R}$.  We define  $S_{F_n}(x)=\sum_{g\in F_n}f(gx)$   to be  the Birkhoff sum of $x$ w.r.t. $F_n$.  We set 
$${\rm mdim}(G,f,\mathcal{U};\{F_n\}):=\limsup_{n \to \infty}\frac{1}{|F_n|}\sup_{\mathcal{V}\in \mathcal{P}_n(\UU)}\{{\mathcal{D}}(S_{F_n}f,\mathcal{V})\},$$ 
where  $\mathcal{P}_n(\UU)$ is the set of finite open covers $\VV$ that realize the minimum  for $\mathcal{D}(\mathcal{U}_{F_n})$,  and $ {\mathcal{D}}(S_{F_n}f,\mathcal{V}):=\max_{x\in X}\{{\rm ord}_x(\mathcal{V})+S_{F_n}f(x)\}$.
\begin{df}
We define the \emph{mean dimension of $X$ with potential $f$ w.r.t. $\{F_n\}$} as
$${\rm mdim}(X,G,f;\{F_n\})=\sup_{\mathcal{U} \in \mathcal{C}_X^o}{\rm mdim}(G,f,\mathcal{U};\{F_n\}).$$
\end{df}
Clearly,  ${\rm mdim}(X,G)={\rm mdim}(X,G,0;\{F_n\})$  if $f$ is a zero potential.

\begin{prop}
Let $(X,G)$ be a  $G$-system with a metric $d$. Then  for every  F\o lner sequence $\{F_n\}_{n\ge1}$  of $G$, $${\rm mdim}(G,\cdot,\mathcal{U};\{F_n\}):C(X)\rightarrow \mathbb{R}$$ and  ${\rm \overline{mdim}}_M(G,X,\cdot,d):C(X)\rightarrow \mathbb{R}$ are two  pressure functions.
\end{prop}
\begin{proof}
For every  F\o lner sequence  $\{F_n\}$ of $G$,  it is clear that  ${\rm mdim}(G,\cdot,\mathcal{U};\{F_n\})$ satisfies monotonicity, translation invariance and  convexity. The remaining  it  to check  the cohomology.

 Fix $g\in G$ and $f\in C(X)$.  Then for every $\mathcal{V} \in \mathcal{P}_n(\UU)$,
\begin{align*}
{\mathcal{D}}(S_{F_n}(f\circ g),\mathcal{V})&=\max_{x\in X}\{{\rm ord}_x(\mathcal{V})+S_{gF_n}{f}(x)\}\\
&\leq {\mathcal{D}}(S_{F_n}f,\mathcal{V})+  |gF_n\triangle F_n|\cdot ||f||_{\infty}.
\end{align*}  
The  amenability of F\o lner  sequence   implies that  $${\rm mdim}(G,f\circ g,\mathcal{U};\{F_n\})\leq {\rm mdim}(G,f,\mathcal{U};\{F_n\})$$ for all $g\in G$. Thus, we conclude that   ${\rm mdim}(G,\cdot,\mathcal{U};\{F_n\}):C(X)\rightarrow \mathbb{R}$ is a pressure function. The same arguments show that  ${\rm \overline{mdim}}_M(G,X,\cdot,d):C(X)\rightarrow \mathbb{R}$ is also a pressure function.
\end{proof}

\begin{df}
Let $(X,G)$ be a $G$-system, and  $\{F_n\}$ be a  F\o lner sequence  of $G$. For every $\mu \in \mathcal{M}(X,G)$, 

$(1)$ we define the amenable measure-theoretic mean dimension of $\mu$ w.r.t. $\{F_n\}$ as
$${\rm mdim}_{\mu}(X,G;\{F_n\})=\inf_{f\in C(X)}\{{\rm mdim}(X,G,f;\{F_n\})-\int fd\mu\};$$

$(2)$ we define the  amenable measure-theoretic metric mean dimension of $\mu$  as $${\rm \overline{mdim}}_M(G,\mu,X,d)=\inf_{f\in C(X)}\{{\rm \overline{mdim}}_M(G,X,f,d)-\int fd\mu\}.$$ 
\end{df}

Applying the  Theorem \ref{thm 3.12} to the two types of pressure functions, we obtain the following new variational principles for mean dimensions  whose forms are more close to the classical ones, which do not assume  the marker property for dynamical systems and give an answer to \emph{Question 2}.

\begin{thmb}\label{thm 4.3}
Let $(X,G)$ be a $G$-system. 

$(1)$ If ${\rm mdim}(X,G)<\infty$, then for every  F\o lner sequence  $\{F_n\}$ of $G$
\begin{align*}
{\rm mdim}(X,G)=\max_{\mu \in \mathcal{M}(X,G)}{\rm mdim}_{\mu}(X,G;\{F_n\}).
\end{align*}

$(2)$ If ${\rm \overline{mdim}}_M(G,X,d)<\infty$, then
\begin{align*}
{\rm \overline{mdim}}_M(G,X,d)=\max_{\mu \in \mathcal{M}(X,G)} {\rm \overline{mdim}}_M(G,\mu,X,d).
\end{align*}
\end{thmb}

\subsection{Application to preimage entropy-like quantities of non-invertible RDSs}

\subsubsection{Preimage entropy-like quantities of non-invertible RDSs} 
We assume that $G=\mathbb{Z_+}$. Recall that a non-invertible RDS over $(\Omega,\mathcal{F},\mathbb{P},\theta)$ is generated by mappings $T_{\omega}:X\rightarrow X$ with  iterates  
\begin{align*}
T_{\omega}^n=
\begin{cases}
T_{\theta^{n-1}\omega}\circ T_{\theta^{n-2}\omega}\circ\cdots \circ T_{\omega},  &\mbox{for}~n\geq 1\\
id,&\mbox{for}~n=0
\end{cases}
\end{align*}      
such that $(\omega,x)\mapsto T_{\omega}x$ is measurable and $x\mapsto T_{\omega}x$ is continuous for all $\omega$.

In the following, we first  recall the  precise definitions  of three types of preimage  pressures(and  entropies) for non-invertible  RDSs:  pointwise preimage entropies  and topological preimage entropy.

% Besides,  the concepts of   three types of preimage metric mean dimensions (with potential) are introduced to characterize the systems with infinite preimage entropies.   
Given $\omega \in \Omega$, $n\in \mathbb{N}$, for any $x,y \in X$, we define the Bowen  metric
$$d_n^{\omega}(x,y)=\max_{0\leq j  \leq n-1}d(T_{\omega}^{j}x,T_{\omega}^{j}y).$$ 
Letting $f\in L_{\mathbb{P}}^1(\Omega, C(X))$,  we  set 
$$S_nf(\omega,x)=\sum_{j=0}^{n-1}f\circ\Theta^j(\omega,x)=\sum_{j=0}^{n-1}f(\theta^j\omega,T_{\omega}^jx).$$

Let $E$ be a non-empty set of $X$. A set $F\subset E$ is an  \emph{$(\omega,n,\epsilon)$-separated set} if   $d_n^{\omega}(x,y)>\epsilon$ for any  $x,y \in F$ with $x\not=y$. Let  $s_n(T,E,\omega,\epsilon)$  denote the   maximal  cardinality  of {$(\omega,n,\epsilon)$-separated sets} of $E$.

Topological preimage entropy was  firstly introduced by Cheng and Newhouse \cite{cn05} and was extended to  topological preimage pressure  by Zeng  et al. \cite{zyz07}, which  were  further investigated by  the   several authors  for   non-invertible RDSs  \cite{z07,mc09,z1109}. We  define  
\begin{align*}
P_{pre,n}(T,f,\omega,\epsilon,k)
=\sup_{x\in X}\sup\{\sum_{y\in  F}e^{S_nf(\omega,y)}: F~ \text{is an}~(\omega, n, \epsilon)\text{-separated set of}~ T_\omega^{-k}x\}.
\end{align*}
Since the quantity $P_{pre,n}(T,f,\omega,\epsilon,k)$  is measurable in $\omega$ \cite[Lemma 2.1]{z1109},  we set
$$P_{pre}(T,f,\epsilon)=\limsup_{n \to \infty}\frac{1}{n}\sup_{k\geq n}\int \log P_{pre,n}(T,f,\omega,\epsilon,k)d\P(\omega).$$
The \emph{random topological preimage pressure of $f$}  \cite{z1109} is defined by
\begin{align*}
P_{pre}^{*}(T,f)&=\lim\limits_{\epsilon \to 0}P_{pre}(T,f,\epsilon).
\end{align*}

If $f=0$, we let $h_{pre}^{*}(T):=P_{pre}^{*}(T,0)$,  and call $h_{pre}^{*}(T)$  \emph{random  topological preimage entropy of $X$} \cite{z07,mc09}.

A pointwise approach  to preimage entropy  is due to Hurley \cite{hur95} for $\mathbb{Z}_{+}$-actions. See also  \cite{wz21} for the extensions of  pointwise  preimage entropies in  $\mathbb{Z}_+$-actions and in  non-invertible RDSs \cite{lwz20,wwz23}. We put
\begin{align*}
P_{pre,n}(T,f,\omega,x,\epsilon)
=\sup_{}\{\sum_{y\in  E}e^{S_nf(\omega,y)}: E~ \text{is an}~(\omega, n, \epsilon)\text{-separated set of}~ T_\omega^{-n}x\}.
\end{align*}
In  \cite{wwz23},  Wang, Wu and Zhu  showed  that  both  the quantities $P_{pre,n}(T,f,\omega,x,\epsilon)$ and $\sup_{x\in X}P_{pre,n}(T,f,\omega,x,\epsilon)$  are measurable in $\omega$. This allows us to  define 
\begin{align*}
P_{pre,m}(T,f,\epsilon)&=\limsup_{n \to \infty}\frac{1}{n}\int \log \sup_{x\in X}P_{pre,n}(T,f,\omega,x,\epsilon)d\P(\omega),\\
P_{pre,p}(T,f,\epsilon)&= \sup_{x\in X}\limsup_{n \to \infty}\frac{1}{n}\int \log P_{pre,n}(T,f,\omega,x,\epsilon)d\P(\omega).
\end{align*}
The \emph{random pointwise preimage pressures of $f$}  are defined by
\begin{align*}
P_{pre,m}^{*}(T,f)&=\lim\limits_{\epsilon \to 0}P_{pre,m}(T,f,\epsilon),\\
P_{pre,p}^{*}(T,f)&=\lim\limits_{\epsilon \to 0}P_{pre,p}(T,f,\epsilon).
\end{align*}

If $f=0$, we let $h_{pre,m}^{*}(T):=P_{pre,m}^{*}(T,0)$ and   $h_{pre,p}^{*}(T):=P_{pre,p}^{*}(T,0)$, and  call $h_{pre,m}^{*}(T)$, $ h_{pre,p}^{*}(T)$  \emph{random  topological preimage entropies of $X$}, respectively. 

For any $f\in L_{\mathbb{P}}^1(\Omega, C(X))$, it   is clear that
\begin{align*}
&P_{pre,p}^{*}(T,f)\leq P_{pre,m}^{*}(T,f)\leq P_{pre}^{*}(T,f),
\end{align*}
and  the three types of preimage topological pressures  are independent of the choice of the  compatible  metrics on $X$.

To distinguish these non-invertible dynamical systems with infinite preimage  entropies, inspired by \cite{lw00,t20} we introduce the corresponding preimage metric mean dimension  (with potential) for  the  above three types of infinite preimage entropies. This is  realized  by capturing the ``information"  concerning  how fast  the corresponding preimage $\epsilon$-entropies  diverges to  $\infty$ as $\epsilon  \to 0$.

\begin{df} \label{df 4.10}
Let $T=(T_{\omega})$ be a RDS over the measure-preserving system $(\Omega,\mathcal{F},\mathbb{P},\theta)$, and let $f \in  L_{\mathbb{P}}^1(\Omega, C(X))$.

$(1)$  We define the upper  preimage metric mean dimension of $X$ with potential $f$ as 
$$\mathbb{E}{\overline{\rm mdim}}_{pre}(T,f,d):=\limsup_{\epsilon \to 0}\frac{P_{pre}(T,f\cdot \logf,\epsilon)}{\logf}.$$

$(2)$  We define the upper  pointwise preimage metric mean dimension of $X$ with potential $f$ as 
\begin{align*}
\mathbb{E}{\overline{\rm mdim}}_{pre,s}(T,f,d):&=\limsup_{\epsilon \to 0}\frac{P_{pre,s}(T,f\cdot \logf,\epsilon)}{\logf},
\end{align*}
where $s\in \{m,p\}$.
\end{df}

In particular,  for the zero potential case we let $\mathbb{E}{\overline{\rm mdim}}_{pre}(T,X,d):= \mathbb{E}{\overline{\rm mdim}}_{pre}(T,0,d)$, $\mathbb{E}{\overline{\rm mdim}}_{pre,m}(T,X,d):=\mathbb{E}{\overline{\rm mdim}}_{pre,m}(T,0,d)$, and $\mathbb{E}{\overline{\rm mdim}}_{pre,p}(T,X,d):=\mathbb{E}{\overline{\rm mdim}}_{pre,p}(T,0,d)$, and call   $\mathbb{E}{\overline{\rm mdim}}_{pre}(T,X,d)$, $\mathbb{E}{\overline{\rm mdim}}_{pre,m}(T,X,d)$, and  $\mathbb{E}{\overline{\rm mdim}}_{pre,p}(T,X,d)$ the random  preimage metric mean dimension of $X$, random pointwise metric mean dimension of $X$, respectively.

For any $f\in L_{\mathbb{P}}^1(\Omega, C(X))$, it is clear from the definitions  that
\begin{align*}
&P_{pre,p}(T,f)\leq P_{pre,m}(T,f)\leq P_{pre}(T,f),\\
&\mathbb{E}{\overline{\rm mdim}}_{pre, p}(T,f,d)\leq \mathbb{E}{\overline{\rm mdim}}_{pre, m}(T,f,d) \leq  \mathbb{E}{\overline{\rm mdim}}_{ pre}(T,f,d).
\end{align*}

\subsubsection{Variational principles of  preimage entropy-like quantities}
Next we  show the aforementioned preimage  pressures-like quantities   are  random pressure functions on   $L_{\mathbb{P}}^1(\Omega, C(X))$.

\begin{prop}\label{prop 4.11}
Let $T=(T_{\omega})$ be a RDS over the measure-preserving system $(\Omega,\mathcal{F},\mathbb{P},\theta)$. 
Let $f,g\in L_{\mathbb{P}}^1(\Omega, C(X))$. Then  $P_{pre}^{*}(T,\cdot)$ satisfies  the  following statements: 
\begin{enumerate}
\item  $ h_{pre}^{*}(T)-||f|| \leq P_{pre}^{*}(T,f) \leq h_{pre}^{*}(T)+||f|| $.
\item  $P_{pre}^{*}(T,\cdot): L_{\mathbb{P}}^1(\Omega, C(X))\longrightarrow \R\cup\{\infty\}$ is either finite value or constantly $\infty$.
\item (monotonicity) If  $f\leq g$, then  $P_{pre}^{*}(T,f)\leq P_{pre}^{*}(T,g)$.\\
\item (translation invariance) $ P_{pre}^{*}(T,f+c)=P_{pre}^{*}(T,f)+c$ for  any $c\in \R$.

\item (Lipschitz and convexity) If $h_{pre}^{*}(T)<\infty$, then 
$$|P_{pre}^{*}(T,f)-P_{pre}^{*}(T,g)|\leq ||f-g||$$
and $P_{pre}^{*}(T,\cdot)$ is convex on $L_{\mathbb{P}}^1(\Omega, C(X))$. 
\item (cohomology) $P_{pre}^{*}(T,f+g\circ \Theta-g)= P_{pre}^{*}(T,f)$.
%\item $P_{pre}(T,f+g)\leq P_{pre}(T,f)+P_{pre}(T,g)$.

%\item  If  $c\geq 1$, then $P_{pre}(T,cf)\leq cP_{pre}(T,f)$; if  $c\leq 1$, then $P_{pre}(T,cf)\geq cP_{pre}(T,f)$.
%\item $|P_{pre}(T,f)|\leq P_{pre}(T,|f|)$.
\end{enumerate}
Furthermore,  the above properties are  also valid for 
$ P_{pre,p}^{*}(T,\cdot)$,
$P_{pre,m}^{*}(T,\cdot)$.
\end{prop}

\begin{proof}
These properties  can be  proved by  modifying the proof of  Proposition \ref{Prop 3.2}.  Due to the  completeness, we  give a sketch for $P_{pre}^{*}(T,\cdot)$.

(1-2).  It   follows from the inequality
\begin{align*}
e^{\sum_{j=0}^{n-1}-||f(\theta^j\omega)||_{\infty}} P_{pre,n}(T,0,\omega,\epsilon,k)&\leq P_{pre,n}(T,f,\omega,\epsilon,k)\\
&\leq e^{\sum_{j=0}^{n-1}||f(\theta^j\omega)||_{\infty}} P_{pre,n}(T,0,\omega,\epsilon,k).
\end{align*}
for any $k\geq n$ and $\omega \in \Omega$.

(5).  Let $0<\epsilon <1, k\geq n$  and $\omega \in \Omega$. Then 
\begin{align*}
P_{pre,n}(T,f,\omega,\epsilon,k)
\leq e^{\sum_{j=0}^{n-1}||(f-g)(\theta^j\omega)||_{\infty}} P_{pre,n}(T,g,\omega,\epsilon,k), 
\end{align*}
which implies that  $P_{pre}^{*}(T,f)\leq P_{pre}^{*}(T,g)+||f-g||.$
Exchanging the role of $f$ and $g$,  one gets  $$P_{pre}^{*}(T,g)\leq P_{pre}^{*}(T,f)+||f-g||.$$

Let $p\in[0,1]$, and let $E$ be an  $(\omega, n,\epsilon)$-separated set of $T_{\omega}^{-k}x$. Applying H\"{o}lder's inequality, we have
\begin{align*}
\sum_{y\in E}e^{pS_nf(\omega, y)+(1-p)S_ng(\omega, y)}\leq (\sum_{y\in E}e^{S_nf(\omega, y)})^p(\sum_{y\in E}e^{S_ng(\omega, y)})^{(1-p)},
\end{align*}
which yields that  $P_{pre}^{*}(T,pf+(1-p)g)\leq pP_{pre}^{*}(T,f)+(1-p)P_{pre}^{*}(T,g)$.

(6) Let  $F$ be an  $(\omega, n,\epsilon)$-separated set of $T_{\omega}^{-k}x$. Then
$$\sum_{y\in F}e^{S_n(f+g\circ \Theta -g)(\omega, y)}= \sum_{y\in F}e^{S_nf(\omega, y)+g(\theta^n\omega,T_\omega^ny)-g(\omega,y)}.$$
Then the desired inequality holds by considering the following inequality
\begin{align*}
&\sum_{y\in F}e^{S_nf(\omega, y)-||(g(\theta^n\omega)||_{\infty}-||(g(\omega)||_{\infty}}\\
\leq&\sum_{y\in F}e^{S_n(f+g\circ \Theta -g)(\omega, y)}\\
\leq &\sum_{y\in F}e^{S_nf(\omega, y)+||(g(\theta^n\omega)||_{\infty}+||(g(\omega)||_{\infty}}.
\end{align*}
\end{proof}

With the above notions, we introduce the preimage metric mean dimensions of invariant measures.

\begin{df}\label{df 4.12}
Let $T=(T_{\omega})$ be a RDS over the measure-preserving system with $h_{pre}^{*}(T)<\infty$. For any $\mu \in \PP$, we  respectively define  the \emph{measure-theoretic preimage entropy, measure-theoretic point-wise preimage entropies of $\mu$}
\begin{align*}
h_{pre,\mu}^{*}(T)&=\inf\{P_{pre}^{*}(T,f)-\int fd\mu:f\in L_{\mathbb{P}}^1(\Omega, C(X))\},\\
h_{s,\mu}^{*}(T)&=\inf\{P_{pre,s}^{*}(T,f)-\int fd\mu:f\in L_{\mathbb{P}}^1(\Omega, C(X))\},
\end{align*}
 where $s\in \{p,m\}$.
\end{df}

Based on  the Proposition \ref{prop 4.11} and  Theorem \ref{thm 1.1}, one can   formulate the  following new variational principles for random   preimage entropy-like quantities without  requiring the uniform separation of   preimages  for the dynamical systems.

\begin{thmc}\label{thm 4.13}
Let $\Omega$ be a locally compact and separable metric space  with Borel $\sigma$-algebra $\mathcal{B}(\Omega)$. Let $T=(T_{\omega})$ be a   random $\mathbb{Z}_+$ dynamical system  over  an ergodic measure-preserving system $(\Omega, \mathcal{B}(\Omega),\P, \theta)$. If  $h_{pre}^{*}(T)<\infty$, then 
\begin{align*}
h_{pre}^{*}(T)&=\max_{\mu \in \M}h_{pre,\mu}^{*}(T),\\
h_{pre,s}^{*}(T)&=\max_{\mu \in \M}h_{s,\mu}^{*}(T),
\end{align*}
where $s\in \{p,m\}$    
\end{thmc}

\begin{rem}
 When $\Omega$ is a single point,  for  systems  of $\mathbb{Z}_{+}$-actions  with the property of uniform separation of preimages, Li, Wu and Zhu \cite[Theorem A, Proposition 4.2]{lwz20} showed that 
\begin{align*}
{h}_{m,\mu}(T)&=\inf\{{P}_{pre,m}(T,f)-\int fd\mu:f\in C(X)\},\\
&=\inf\{{P}_{pre,p}(T,f)-\int fd\mu:f\in  C(X) \},
\end{align*}
where ${P}_{pre,m}(T,f),$ ${P}_{pre,p}(T,f)$ are    pointwise preimage pressures of $f$ respectively. 
Hence,  the  Definition \ref{df 4.12}   is  compatible with  the $\mathbb{Z}_{+}$-actions,  which  provides a strategy to extend the variational principles  of pointwise preimage entropies  to any  dynamical systems, and establishes  a new  Cheng-Newhouse's variational principle for topological  preimage  entropy. 
\end{rem}

Since the property of uniform separation of preimages on dynamical systems implies the systems have infinite preimage topological entropies, so these systems  have zero preimage metric mean dimensions. Therefore, the previous techniques for establishing the variational principles for preimage topological entropy-like quantities \cite{cn05,wz21}  fail to apply to preimage  metric mean dimensions in such cases. The  convex approach that we developed  works for this case.  One can show that preimage metric mean dimensions with potential are (random) pressure functions. Then we use Theorem \ref{thm 1.1} to get the  variational principles for preimage metric mean dimension:

\begin{thmd}\label{thm 4.15}
Let $\Omega$ be a locally compact and separable metric space  with Borel $\sigma$-algebra $\mathcal{B}(\Omega)$. Let $T=(T_{\omega})$ be a   random $\mathbb{Z}_+$-dynamical system  over  an ergodic measure-preserving system $(\Omega, \mathcal{B}(\Omega),\P, \theta)$. If  $\mathbb{E}{\overline{\rm mdim}}_{pre}(T,X,d)<\infty$, then 
\begin{align*}
\mathbb{E}{\overline{\rm mdim}}_{pre}(T,X,d)&=\max_{\mu \in \M} \mathbb{E}{\overline{\rm mdim}}_{pre,\mu}(T),\\
\mathbb{E}{\overline{\rm mdim}}_{pre,s}(T,X,d)&=\max_{\mu \in \M}\mathbb{E}{\overline{\rm mdim}}_{s,\mu}(T),
\end{align*}
where $s\in \{p,m\}$  and   
\begin{align*}
 \mathbb{E}{\overline{\rm mdim}}_{pre,\mu}(T)&=\inf\{\mathbb{E}{\overline{\rm mdim}}_{pre}(T,f,d)-\int fd\mu:f\in L_{\mathbb{P}}^1(\Omega, C(X))\},\\
\mathbb{E}{\overline{\rm mdim}}_{s,\mu}(T)&=\inf\{\mathbb{E}{\overline{\rm mdim}}_{pre,s}(T,f,d)-\int fd\mu:f\in L_{\mathbb{P}}^1(\Omega, C(X))\}.
\end{align*} 
\end{thmd}

\section{Appendix}
In this section, we show  that the random topological pressure   is independent of  the choice of the  F\o lner sequences of amenable groups.

A set function $h:\FF(G)\rightarrow \R$  is said  to be \\
(1)  monotone if $h(E)\le h(F)$ for  $\forall E,F\in\FF(G)$ with $E\subset F$;\\
(2) $G$-invariant if $h(Eg)= h(E)$  for $\forall g\in G$, $\forall E\in\FF(G)$; \\
(3) sub-additive if $h(E\cup F)\le h(E)+h(F)$   for any disjoint $E,F\in\FF(G)$.

The following lemma is the well-known Ornstein-Weiss Theorem, which is an amenable version of  Fekete's lemma.
\begin{lem}\cite{ow87,gromov,lw00}\label{lem 5.1}
	Let $G$ be a countable amenable group. Let  $h:\FF(G)\rightarrow \R$  be a monotone $G$-invariant sub-additive function. Then there exists $\lambda\in [-\infty,\infty)$ depending on $G$ and $h$  such  that
	$$\lim_{n\to\infty}\frac{h(F_n)}{|F_n|}=\lambda$$
	for all F\o lner sequences $\{F_n\}_{n\ge1}$  of $G$.
\end{lem}

Let $(X,d)$ be a compact metric space.  Given   a finite open cover $\mathcal{U}$ of $X$, the \emph{diameter} of $\mathcal{U}$,   denoted by $\rm{diam} \mathcal{U}$,  is   the maximum of the diameters of its elements  w.r.t. $d$.  The \emph{Lebesgue number} of $\mathcal{U}$,   denoted by $\Leb(\UU)$,  is  the largest  positive number $\delta >0$ such that  each  open ball  $B_d(x,\delta)$ is contained in  some  element of $\mathcal{U}$. The \emph{join} of   two finite open covers $\mathcal{U}, \mathcal{V}$ of $X$ is    defined by   $\mathcal{U}\vee \mathcal{V}:=\{U\cap V: U\in \mathcal{U}, V\in \mathcal{V}\}$.

Given  a finite open cover  $\mathcal{U}$ of $X$ and $F\in \FF(G)$, we  put
\begin{align}
	Q_F(G,f,\omega, \mathcal{U})
	=\inf\{\sum_{A\in \eta}\sup_{x\in A}e^{S_Ff(\omega,x)}: \eta\mbox{ is a } \mbox{subcover of} \vee_{g\in F}(T_{g,\omega})^{-1}\nonumber \mathcal{U}\}.
\end{align}
One has  $Q_F(G,f,\omega, \mathcal{U})$ is measurable in $\omega$ by the proof of \cite[Proposition 1.6]{kif01}. Let $\{F_n\}$ be a F\o lner sequence of $G$.  We define
$$	Q(G,f,\mathcal{U})=\lim_{n\to \infty}  \frac{1}{|F_n|}\int   \log Q_{F_n}(G,f,\omega, \mathcal{U})d\P(\omega).$$

\begin{prop}
Let $T=(T_{\omega})$ be a RDS over the measure-preserving system $(\Omega,\mathcal{F},\mathbb{P},\theta)$. Put 
$$h: F\in \FF(G) \mapsto  \int   \log Q_{F}(G,f,\omega, \mathcal{U})d\P(\omega).$$
Then  $h$  is a monotone, $G$-invariant and  sub-additive function.
 
Consequently,  $Q(G,f, \mathcal{U})$ is independent of the choice of F\o lner sequence of $G$.
\end{prop}

\begin{proof}
Let  $f  \in L^1(\Omega, C(X))$ with $f\geq 0$. Clearly, $h$ is  monotone.

Fix $E \in \FF(G)$ and $h\in G$.  It follows from the cocycle property that
$\vee_{g\in Eh}(T_{g,\omega})^{-1}\nonumber \mathcal{U}=(T_{h,\omega})^{-1}\vee_{g\in E}(T_{g,h\omega})^{-1}\nonumber \mathcal{U}.$
Suppose that $\eta$ is a subcover of $\vee_{g\in E}(T_{g,h\omega})^{-1}\nonumber \mathcal{U}$. Then $(T_{h,\omega})^{-1}\eta$ is a subcover of  $\vee_{g\in Eh}(T_{g,\omega})^{-1}\nonumber \mathcal{U}$.  Thus,
\begin{align*}
Q_{Eh}(G,f,\omega, \mathcal{U}) \leq \sum_{A\in \eta}\sup_{x\in (T_{h,\omega})^{-1} A}e^{S_{Eh}f(\omega,x)}&\leq  \sum_{A\in \eta}\sup_{x\in A}e^{S_Ef(h\omega,x)}.
\end{align*}
This implies that  $Q_{Eh}(G,f,\omega, \mathcal{U})\leq Q_{E}(G,f,h\omega, \mathcal{U})$. By the $G$-invariance of $\mathbb{P}$, we have $h(Eh)\leq h(E)$. Hence, $h(Eh)= h(E)$ for all $E\in \FF(G)$ and $h\in G$. This shows that $h$ is  $G$-invariant.

Now choose two disjoint $E,F \in \FF(G)$.  Let  $\gamma >0$ and  choose  a subcover $\eta_1$ of $\vee_{g\in E}(T_{g,\omega})^{-1}\nonumber \mathcal{U}$  and  a subcover $\eta_2$ of $\vee_{g\in F}(T_{g,\omega})^{-1}\nonumber \mathcal{U}$ such that  $$ \sum_{A\in \eta_1}\sup_{x\in A}e^{S_Ff(\omega,x)}<	Q_E(G,f,\omega, \mathcal{U})+\frac{\gamma}{2}$$ and $ \sum_{A\in \eta_2}\sup_{x\in A}e^{S_Ff(\omega,x)}<	Q_F(G,f,\omega, \mathcal{U})+\frac{\gamma}{2}.$ Then $\eta_1\vee \eta_2$ is a subcover of  $\vee_{g\in E\cup F}(T_{g,\omega})^{-1}\nonumber \mathcal{U}$. Hence, we have
\begin{align*}
	Q_{E\cup F}(G,f,\omega, \mathcal{U})& \leq \sum_{A \cap B\in \eta_1\vee \eta_2}\sup_{x \in  A\cap B}e^{S_{E\cup F}f(\omega,x)}\\
	 &\leq (Q_E(G,f,\omega, \mathcal{U})+\frac{\gamma}{2}) (Q_F(G,f,\omega, \mathcal{U})+\frac{\gamma}{2}). 
\end{align*}
This implies that $h(E\cup F) \leq h(E) + h(F)$. 

Therefore, by Lemma \ref{lem 5.1}, the limit $	\lim_{n\to\infty}\frac{h(F_n)}{|F_n|}$ is independent of the choice of  F\o lner sequence of $G$.  We verify this fact for any $f\in L^{1}(\Omega, C(X))$ by reducing to the non-negative case.  Let $\tilde{f}(\omega,x)=f(\omega,x)-\min_{x\in X}f(\omega,x)\geq 0$. Then
$$	Q_F(G,\tilde{f},\omega, \mathcal{U})= e^{-\min_{x\in X}f(\omega,x) \cdot |F|}	Q_F(G,f,\omega, \mathcal{U}).$$
Then for any  F\o lner sequence of $G$, we have
\begin{align*}
&\lim_{n\to \infty}  \frac{1}{|F_n|}\int   \log Q_{F_n}(G,\tilde{f},\omega, \mathcal{U})d\P(\omega)\\
=&\lim_{n\to \infty}  \frac{1}{|F_n|}\int   \log Q_{F_n}(G,f,\omega, \mathcal{U})d\P(\omega)- \int_\Omega \min_{x\in X}f(\omega,x)d\mathbb{P}(\omega),
\end{align*}
where the second term on the right-hand side of the equality is a  constant.

To sum up,  $Q(G,f, \mathcal{U})$ is independent of the choice of F\o lner sequence of $G$.
\end{proof}

\begin{prop}\label{prop 5.3}
Let $T=(T_{\omega})$ be a RDS over the measure-preserving system $(\Omega,\mathcal{F},\mathbb{P},\theta)$. Let $f\in L^1(\Omega, C(X))$. Then for  every  F\o lner sequence  $\{F_n\}$ of  $G$,
\begin{align*}
\lim_{\epsilon \to 0} P(G,f,d,\{F_n\},\epsilon)=\sup_{\UU \in \mathcal{C}_X^o}Q(G,f,\mathcal{U}).
\end{align*}
Consequently, $P(G,f)$ is well-defined.
\end{prop}
\begin{proof}
Fix a F\o lner sequence  $\{F_n\}$ of $G$. Let  $\epsilon >0$ and $F \in \FF(G)$. Let  $\mathcal{U}$  be a finite open cover of $X$ with $\rm{diam} \mathcal{U}\leq \epsilon$. Let $E$ be an  $(\omega, F,\epsilon)$-separated set of 
$X$. Then  for any subcover  $\eta$ of  $\vee_{g \in F}(T_{g,\omega})^{-1} \mathcal{U}$,  each element of $\eta$ at most contains one  element $x\in E$. This implies that  $P_F(G,f,d,\omega,\epsilon)\leq  Q_F(G,f,\omega, \mathcal{U})$, and hence
\begin{align}\label{inequ 5.1}
 P(G,f,d,\{F_n\},\epsilon) \leq \inf_{\diam \UU \leq \epsilon}Q(G,f,\mathcal{U}).
\end{align}

Let  $S$ be a finite $\frac{\epsilon}{4}$-net of  $X$.  Let $\VV=\{B_d(x,\frac{\epsilon}{2}): x\in S\}$. Then  $\VV$ is   a finite open cover  of $X$  with $\rm{diam} \mathcal{V}\leq\epsilon$ and ${\rm Leb} (\mathcal{V})\geq \frac{\epsilon}{4}$. Let $E$ be an $(\omega, F,\frac{\epsilon}{4})$-separated set of  $X$ with the largest cardinality,  and  let $\eta$  be a  subcover of  $\vee_{g\in F}(T_{g,\omega})^{-1} \mathcal{V}$. Then  for any $x\in E$,  the open ball $B_F^{\omega}(x,\frac{\epsilon}{4})$ is contained in  some elements  of $\eta$. We fix an element $A_x\in \eta$  for every $x\in E$. Let $\eta_1 \subset \eta$  be a corresponding  subcover containing these open balls. Then
\begin{align*}
\begin{split}
Q_F(G,f,\omega, \mathcal{V})&\leq  \sum_{A_x\in \eta_1,x\in E}\sup_{y\in A_x} e^{S_Ff(\omega,y)}\\
&\leq  \sum_{x\in E}e^{S_Ff(\omega,x)+\sum_{g \in F}\gamma_{\epsilon}(g\omega)},
\end{split}
\end{align*}
where $\gamma_{\epsilon}(\omega):=\sup\{|f(\omega,x)-f(\omega,y)|: x,y\in X, d(x,y)<2\epsilon\}$. This implies that
\begin{align}\label{equ 2.5}
Q_F(G,f,\omega, \mathcal{V})\leq  e^{\sum_{g\in F}\gamma_{\epsilon}(g\omega)} P_F(G,f,d,\omega,\frac{\epsilon}{4}).
\end{align}

We show that  $\gamma_{\epsilon}(\omega)\in L^{1}(\Omega, \mathcal{F},\mathbb{P})$.
Define  a map $\hat{f}:\Omega \times (X\times X)\rightarrow \mathbb{R}_{\geq 0}$  given by $\hat{f}(\omega, x,y)$= $|f(\omega,x)-f(\omega,y)|$. Since $f\in L^1(\Omega, C(X)),$ then $\hat{f}(\omega, \cdot,\cdot)$   is continuous on $X\times X$ for any fixed $\omega$,  and $\hat{f}(\cdot,x,y)$ is measurable in  $\omega$  for any fixed $(x,y)\in X\times X$.   Choose two  countable dense subsets $S_1,S_2$ of $X$.  Then  for any fixed $\omega \in \Omega$,
$$\sup_{(x,y)\in X\times X,\atop d(x,y)<2\epsilon}\hat{f}(\omega,x,y)=\sup_{(x,y)\in S_1\times S_2,\atop d(x,y)<2\epsilon}\hat{f}(\omega,x,y).$$
Let $r>0$. Then 
\begin{align*}
\{\omega\in\Omega:\gamma_{\epsilon}(\omega)\leq r\}=&\{\omega\in\Omega:\sup_{(x,y)\in X\times X,\atop d(x,y)<2\epsilon}\hat{f}(\omega,x,y)\leq r\}\\
=&\{\omega\in\Omega:\sup_{(x,y)\in S_1\times S_2,\atop d(x,y)<2\epsilon}\hat{f}(\omega,x,y)\leq r\}\\
=&\mathop{\cap}_{(x,y)\in S_1\times S_2, \atop d(x,y)<2\epsilon}\{\omega\in\Omega:\hat{f}(\omega,x,y)\leq r\}\in \mathcal{F},
\end{align*}
which implies that $\gamma_{\epsilon}(\omega)$ is  measurable in $\omega$. Notice that  $\gamma_{\epsilon}(\omega)\leq2||f(\omega)||_{\infty}$ and $f\in L^1(\Omega, C(X))$. Then  $\gamma_{\epsilon}\in L^{1}(\Omega, \mathcal{F},\mathbb{P})$.

%Since every F\o lner sequence has a tempered  F\o lner subsequence, we may assume that  $\{F_n\}$ is tempered.  Then, applying  Lindenstrauss's ergodic theorem to $\gamma_{\epsilon}(\omega)$, one  has
%\begin{align}\label{equ 2.6}
%\begin{split}
%\int\lim_{n\to\infty}\frac{1}{|F_n|}\sum_{g\in F_n}\gamma_{\epsilon}(g\omega)d\P(\omega)&=\int \gamma_{\epsilon}(\omega)d\P(\omega).
%\end{split}
%\end{align}
Hence, by (\ref{equ 2.5}) we have
\begin{align}\label{inequ 5.3}
\inf_{\rm{diam} \mathcal{V} \leq \epsilon}  Q(G,f,\mathcal{V})
\leq  P(G,f,d,\{F_n\},\frac{\epsilon}{4})
+\int \gamma_{\epsilon}(\omega)d\P(\omega).
\end{align}
%\int \gamma_{\epsilon}(\omega)d\P(\omega)Therefore, 
%\int \gamma_{\epsilon}(\omega)d\P(\omega)\int \gamma_{\epsilon}(\omega)d\P(\omega)\int \gamma_{\epsilon}(\omega)d\P(\omega)\begin{align}\label{equ 2.9}
%\begin{split}
%&\inf_{\rm{diam} \mathcal{U} \leq \epsilon}  \int  \lim_{n\to \infty}\frac{1}{n}\log Q_n(T,f,d,\omega, \mathcal{U})d\P(\omega)\\
%\leq&  \int  \limsup_{n\to \infty}\frac{1}{n}\log P_n(T,f,d,\omega,\frac{\epsilon}{4})d\P(\omega)
%+M(\epsilon), \mbox{by (\ref{equ 2.7})}\\
%\leq&  \inf_{\rm{diam} \mathcal{U} \leq \frac{\epsilon}{4}}   \lim_{n\to \infty}\frac{1}{n}\int\log Q_n(T,f,d,\omega, \mathcal{U})d\P(\omega)
%+M(\epsilon), \mbox{by (\ref{equ 2.2}), (\ref{equ 2.4})}\\
%\leq&   \limsup_{n\to \infty}\frac{1}{n} \int\log P_n(T,f,d,\omega,\frac{\epsilon}{16})d\P(\omega)+
%M(\frac{\epsilon}{4})+M(\epsilon), \mbox{by (\ref{equ 2.8})}\\
%\leq&  \inf_{\rm{diam} \mathcal{U} \leq \frac{\epsilon}{16}} \int  \lim_{n\to \infty}\frac{1}{n}\log Q_n(T,f,d,\omega, \mathcal{U})d\P(\omega)+
%M(\frac{\epsilon}{4})+M(\epsilon),\mbox{by (\ref{equ 2.2}), (\ref{equ 2.4})}.
%\end{split}
%\end{align}
Since $\gamma_{\epsilon}(\omega) \to 0$ as $\epsilon \to0$ for all  $\omega$,  then $$\lim_{\epsilon \to 0}\int \gamma_{\epsilon}(\omega) d\P(\omega)=\lim_{n \to \infty}\int \gamma_{\frac{1}{n}}(\omega) d\P(\omega)=0$$ by Levi's monotone convergence theorem.  Notice that $$\sup_{\UU \in \mathcal{C}_X^o}Q(G,f,\mathcal{U})=\lim_{\epsilon \to 0} \inf_{\rm{diam} \mathcal{V} \leq \epsilon}  Q(G,f,\mathcal{V}).$$  Combining this fact with (\ref{inequ 5.1}) and (\ref{inequ 5.3}), we get  $$\lim_{\epsilon \to 0} P(G,f,d,\{F_n\},\epsilon)=\sup_{\UU \in \mathcal{C}_X^o}Q(G,f,\mathcal{U}).$$
\end{proof}

\section*{Acknowledgement} 
 
The authors would like to thank Prof. Paulo Varandas  for  many useful discussions during the preparation of the manuscript. The first author was  supported by  the China Postdoctoral Science Foundation (No. 2024M763856) and  the Postdoctoral Fellowship Program of CPSF  (No. GZC20252040). The  second author was supported by the
National Natural Science Foundation of China (Nos.12471184 and 12071222).
The  third author   was  supported by the
National Natural Science Foundation of China (No. 11971236) and Qinglan project. 

%The work was also funded by the Priority Academic Program Development of Jiangsu Higher Education Institutions.  We would like to express our gratitude to Tianyuan Mathematical Center in Southwest China(No. 11826102), Sichuan University and Southwest Jiaotong University for their support and hospitality.

%\section*{Competing  interests} The authors  do not have  any other competing interests to declare.

%\section*{Data Availability} Data sharing not applicable to this article as no datasets were generated or analysed during
%the current study.
%\bibliographystyle{alpha}
%\bibliography{universal_bib}

\end{document}